\DeclareMathAlphabet{\mathbf}{OT1}{cmr}{bx}{it}
\newcommand{\supp}{\mathrm{supp}\,}
\newcommand{\tr}{\mathrm{tr}\,}
\newcommand{\rg}{\mathrm{rg}\,}
\newcommand{\mc}{\mathcal}
\DeclareMathAlphabet{\mathbf}{OT1}{cmr}{bx}{it}
\newcommand{\argmin}{\operatornamewithlimits{argmin}}
\newcommand{\essinf}{\operatornamewithlimits{ess\ inf}}
\newcommand{\esssup}{\operatornamewithlimits{ess\ sup}}
\newcommand{\bbN}{\mathbb{N}}
\newcommand{\bbR}{\mathbb{R}}
\DeclareMathOperator{\e}{\mathrm e}
\renewcommand{\d}{\mathrm{d}}
 \theoremstyle{definition}
 \newtheorem{defi}{Definition}
 \newtheorem{exam}[defi]{Example}
 \newtheorem{rem}[defi]{Remark}
 \newtheorem{propo}[defi]{Proposition}
 \newtheorem{theo}[defi]{Theorem}
 \newtheorem{lem}[defi]{Lemma}
 \newtheorem{cor}[defi]{Corollary}
\newcommand{\W}{\mathrm{W}}
\newcommand{\Lip}{\mathrm{Lip}}
\newcommand{\muN}{\mu_{\Phi}}
\newcommand{\muA}{\mu_{\widetilde \Phi}}
\newcommand{\muB}{\widetilde \mu_{\Phi}}
\newcommand{\subjclass}[2][1991]{%
  \let\@oldtitle\@title%
  \gdef\@title{\@oldtitle\footnotetext{#1 \emph{Mathematics subject classification.} #2}}%
}
\newcommand{\keywords}[1]{%
  \let\@@oldtitle\@title%
  \gdef\@title{\@@oldtitle\footnotetext{\emph{Key words and phrases.} #1.}}%
}
\title{On the Local Lipschitz Stability of Bayesian Inverse Problems}
\author{Bj\"orn Sprungk$^\dag$\\
\small
$^\dag$ Institute for Mathematical Stochastics, University of G\"ottingen, Goldschmidtstra\ss e 7, 37077 G\"ottingen, Germany\\
}
\date{\today}
\begin{document}
\maketitle

\begin{abstract}
In this note we consider the stability of posterior measures occuring in Bayesian inference w.r.t.~perturbations of the prior measure and the log-likelihood function.
This extends the well-posedness analysis of Bayesian inverse problems.
In particular, we prove a general local Lipschitz continuous dependence of the posterior on the prior and the log-likelihood w.r.t.~various common distances of probability measures.
These include the total variation, Hellinger, and Wasserstein distance and the Kullback--Leibler divergence.
We only assume the boundedness of the likelihoods and measure their perturbations in an $L^p$-norm w.r.t.~the prior.
The obtained stability yields under mild assumptions the well-posedness of Bayesian inverse problems, in particular, a well-posedness w.r.t.~the Wasserstein distance.
Moreover, our results indicate an increasing sensitivity of Bayesian inference as the posterior becomes more concentrated, e.g., due to more or more accurate data. 
This confirms and extends previous observations made in the sensitivity analysis of Bayesian inference.
\end{abstract}

\noindent
\textbf{Keywords:} Bayesian inference, robust statistics, inverse problems, well-posedness, Hellinger distance, Wasserstein distance, Kullback--Leibler divergence
\\[0.5em]

\noindent
\textbf{Mathematics Subject Classification:} 60B10, 62C10, 62F15, 62G35, 65N21   


\section{Introduction}
\label{sec:intro}
In recent years, Bayesian inference has become a popular approach to model and solve inverse problems in various fields of applications, see, e.g., \cite{DashtiStuart2017, KaipioSomersalo2005} for a comprehensive introduction.
Here, noisy observations are used to update the knowledge of unknown parameters from a given \emph{prior} distribution to a resulting \emph{posterior} distribution.
The relation between the parameters and the observable quantities are given by a measurable forwad map which, in combination with an assumed error distribution, determines the likelihood function for the data given the parameter.  
The Bayesian approach is quite appealing and, in particular, yields a well-posed inverse problem \cite{DashtiStuart2017, Hosseini2017, HosseiniNigam2017, Latz2019, Stuart2010, Sullivan2017}, i.e., its unique solution---the posterior distribution---depends (locally Lipschitz) continuously on the observational data and behaves also stable w.r.t.~(numerical) approximations of the forward map.
However, besides the observed data and the employed likelihood model, the subjective choice of the prior distribution significantly affects the outcome of the Bayesian inference, too.
In order to account for that a \emph{robust Bayesian analysis} has emerged, where a class of suitable priors or likelihoods is considered and the range of all resulting posterior quantities or statistics is computed or estimated, see, e.g., \cite{Berger1994, InsuaRuggeri2000} for an introduction. 
Moreover, the well-known \emph{Bernstein--von Mises} theorem \cite{VanDerVaart1998} establishes a kind of ``asymptotic stability'' at least in finite-dimensional spaces.
This theorem tells us that, under suitable assumptions, the posterior measure concentrates around the true parameter, which generates the observations, as more and more data is observed.
This convergence to the truth is called \emph{consistency} and it is independent of the chosen prior measure as long as the true parameter belongs to its support.
However, for posterior measures on infinite-dimensional spaces the situation is far more delicate, and positive as well as negative results for consistency exist, see, e.g., \cite{CastilloNickl2013, CastilloNickl2014, Freedman1999, Leahu2011}.
Furthermore, in \cite{OwhadiScovel2016,OwhadiEtAl2015a,OwhadiEtAl2015b} the authors show an extreme instability of Bayesian inference---called \emph{Bayesian brittleness}---w.r.t.~small perturbations of the likelihood model as well as w.r.t.~classes of priors based on only finitely many pieces of information.
In particular, the range of attainable posterior quantities (e.g., expectations or probabilities) over a class of allowed priors and likelihoods covers the essential (prior) range of the quantity of interest.
This brittleness occurs for arbitrarily many data and arbitrarily small perturbations of the likelihood model.
However, the distance used to measure the size of the perturbations plays a crucial role here as we will discuss later on. 

In this paper we take a slightly different approach than the classical robust Bayesian analysis:
Instead of bounding the resulting posterior range of certain quantities or statistics of interest for a given class of admissible priors or likelihood models, we rather study whether the distance between the posterior measures themselves can be bounded uniformly by a constant multiplied with the distance of the corresponding prior measures or log-likelihood functions.
Thus, the goal is to establish a (local) Lipschitz continuity of the posterior w.r.t.~the prior or the log-likelihood with explicit bounds on the local Lipschitz constant. 
To this end, we employ the following common distances and divergences for (prior and posterior) probability measures: the total variation, Hellinger, and Wasserstein distance as well as the Kullback--Leibler divergence.
Perturbations of the log-likelihood function are measured by suitable $L^p$-norms w.r.t.~the prior measure.
Indeed, under rather mild assumptions we can state the local Lipschitz continuity of posteriors on general Polish spaces w.r.t.~the prior and the log-likehood for all distances and divergences listed above.
On the other hand, our estimates show that the sensitivity of the posterior to perturbations of prior or log-likelihood increases as the posterior concentrates---an observation also made in \cite{DiaconisFreedman1986, GustafsonWasserman1995}.
We discuss this issue and its relation to the Bernstein--von Mises theorem in Section \ref{sec:hellinger} and \ref{sec:wasserstein} in detail. 

As mentioned at the beginning, a local Lipschitz dependence of the posterior measure w.r.t.~the observational data and approximations of the forward map has been proven in \cite{DashtiStuart2017,Stuart2010} for Gaussian and Besov priors and the Hellinger distance.
These results have been generalized to heavy-tailed prior measures in \cite{Hosseini2017, HosseiniNigam2017, Sullivan2017} and a continuous dependence in Hellinger distance was recently shown under substantially relaxed conditions in \cite{Latz2019}.
In the latter work a continuous dependence on the data was also established concerning the Kullback--Leibler divergence and Wasserstein distance between the resulting posteriors.
Moreover, in \cite{MarzoukXiu2009} it is shown that converging approximations of the forward map yield the convergence of the perturbed posteriors to the true posterior in terms of their Kullback--Leibler divergence.
These previous results relate, of course, to our stability statements for perturbed log-likelihoods.

However, the focus of this note is rather on the general structure of the local Lipschitz dependence on the log-likelihood and the prior measure.
In fact, stability w.r.t.~the data or approximations of the forward map follows---under suitable assumptions---from our general results.
Besides that, the local Lipschitz dependence of the posterior on the prior has not been established in the literature to the best of our knowledge.
Furthermore, a stability or well-posedness analysis of Bayesian inverse problems in Wasserstein distance (i.e., perturbations of posterior and prior are measured in Wasserstein distance) has also been missing---the recent results on continuity w.r.t.~the observed data \cite{Latz2019} have been established in parallel to our work. 
This distance is of particular interest for studying the stability w.r.t.~perturbations of the prior measure on infinite-dimensional spaces such as function spaces.
The reason behind is that the total variation and Hellinger distance as well as the Kullback--Leibler divergence obtain their maximum value for mutually singular measures and probability measures on infinite-dimensional spaces tend to be singular---cf., the necessary conditions for Gaussian measures on Hilbert spaces to be equivalent \cite{Bogachev1998, Kuo1975}.
Thus, these distances and divergences may be of little practical use for prior stability whereas the Wasserstein distance of perturbed priors does not rely on their equivalence.
Besides that, the Wasserstein distance has been proven quite flexible and useful for various topics in probability theory such as convergence of Markov processes \cite{HaMaScheu2011} and perturbation theory for Markov chains \cite{RudolfSchweizer2018, MedinaAguayoEtAl2019}.
We establish a first stability analysis of Bayesian inverse problems w.r.t.~the Wasserstein distance and show how the general stability result yields a well-posedness of Bayesian inverse problems in Wasserstein distance.
For the latter we use the same basic assumptions stated in \cite{DashtiStuart2017, Latz2019, Stuart2010} for the well-posedness in Hellinger distance.

In summary, this paper contributes to the stability analysis of Bayesian inference and provides positive statements in a quite general setting.
Our results, although quantitative, are rather of qualitative nature establishing a local Lipschitz stability and, on the other hand, illustrating the increasing sensitivity of the posterior to perturbations in prior or log-likelihood for an increasingly informative likelihood.  
Our setting considers bounded likelihoods which excludes, e.g., the case of infinite-dimensional observations (cf. \cite[Section 3.3]{DashtiStuart2017}).

The outline of the paper is as follows: In the next section we introduce the general setting and the common form of our main results. 
We also discuss their relation to classical robust Bayesian analysis and Bayesian brittleness.
In Section \ref{sec:hellinger} to \ref{sec:wasserstein} we provide the exact statements and proofs for stability in the Hellinger and total variation distance, Kullback--Leibler divergence, and Wasserstein distance.
In particular, we establish in Section \ref{sec:wasserstein} the well-posedness of Bayesian inverse problems in Wasserstein distance.
Furthermore, the relation of the obtained stability statements to the existing literature and results on robust Bayesian analysis is discussed in Section \ref{sec:literature}.
The Appendix includes some more detailed explanations and calculations on the relation between Bayesian brittleness and stability as well as an explicit computation for the Hellinger distance of Gaussian measures on separable Hilbert spaces.

\section{Setting and Main Results}
\label{sec:pre}
Throughout this paper let $(E,d_E)$ be a complete and separable metric space with Borel $\sigma$-algebra $\mc E$ and let $\mc P(E)$ denote the set of all probability measures $\mu$ on $(E,\mc E)$.
We also consider the special case of a separable Hilbert space $\mc H$ with norm $\|\cdot\|_{\mc H}$.

In this paper we focus on \emph{posterior} probability measures $\muN \in \mc P(E)$ of the form
\begin{equation}\label{equ:mu}
	\muN(\d x) 
	\coloneqq \frac1Z \, \exp(- \Phi(x)) \; \mu(\d x),
\end{equation} 
resulting from a \emph{prior} measure $\mu\in \mc P(E)$ and a measurable negative \emph{log-likelihood} $\Phi \colon E \to \bbR_+$, $\bbR_+ \coloneqq [0,\infty)$.
The constant $Z \coloneqq \int_{E} \e^{- \Phi(x)} \; \mu(\d x) \in (0,\infty)$ denotes the normalization constant, sometimes called \emph{evidence}.
The assumption that $\Phi(x) \geq 0$ is convenient and not very restrictive, since any $\muN$ of the form \eqref{equ:mu} with $\Phi\colon E \to \bbR$ being bounded from below, i.e., $\inf_{x\in E} \Phi(x) > -\infty$, can be rewritten as $\muN(\d x) = \exp(- (\Phi(x) - \inf \Phi))/(Z\exp(-\inf \Phi)) \ \mu(\d x)$.

Posterior measures as in \eqref{equ:mu} occur, for example, when we consider the Bayesian approach to inverse problems such as reconstructing an unknown $x^\dagger\in E$ based on noisy data
\[
	y = G(x^\dagger) + \epsilon,
\]
of a measurable forward map $G\colon E \to \bbR^n$ with additive noise $\epsilon \in \bbR^n$.
Inverse problems are typically ill-posed and require some kind of regularization.
In the Bayesian approach we employ a probabilistic regularization, i.e., we incorporate a-priori knowledge about $x^\dagger$ by a prior probability measure $\mu \in \mc P(E)$, and assume a random noise $\varepsilon \sim \nu_\varepsilon \in \mc P(\bbR^n)$, i.e., $\epsilon$ in the equation above is viewed as a realization of the random variable $\varepsilon$.
The Bayesian approach then consists of conditioning the prior measure $\mu$ on observing the data $y \in \bbR^n$ as a realization of the random variable
\[
	Y \coloneqq G(X) + \varepsilon,
	\qquad
	X \sim \mu, \; \varepsilon \sim \nu_{\varepsilon}\quad \mathrm{ stochastically\; independent}.
\]
This results in a conditional or posterior probability measure on $E$.
If the noise distribution allows for a bounded Lebesgue density $\nu_\varepsilon(\d \epsilon) \propto \exp(- \ell(\epsilon)) \d \epsilon$, with $\ell\colon \bbR^n \to \bbR$ being bounded from below, $\inf_{\epsilon \in \bbR^n} \ell(\epsilon) > -\infty$, 
then the posterior measure of $X\sim\mu$ given that $Y = y$ is of the form \eqref{equ:mu} with $\Phi(x) = \Phi(x,y) \coloneqq \ell(y - G(x))$, see, e.g., \cite{DashtiStuart2017, Latz2019, Stuart2010}.
A common noise model is a mean-zero Gaussian noise, i.e., $\nu_\varepsilon = N(0,\Sigma)$, $\Sigma\in\bbR^{n\times n}$ symmetric and positive definite, which yields $\Phi(x) = \Phi(x,y) = \frac 12 |\Sigma^{-1/2}(y -G(x))|^2$ where $|\cdot|$ denotes the Euclidean norm on $\bbR^n$.
For clarity, we omit the dependence of the observational data $y$ in $\Phi$ most of the time.
The main focus of this paper is to study the stability of the posterior $\muN$ w.r.t.~perturbations of the log-likelihood model $\Phi$ as well as the chosen prior measure $\mu$.

An interesting and important special case in practice are Bayesian inverse problems in function spaces, i.e., where $E$ is a separable Hilbert space such as $\mc H = L^2(D)$ with $D\subset \bbR^n$ denoting a spatial domain, cf. Example \ref{exam:one}.
In such situations Gaussian measures $\mu = N(m,C)$ on $\mc H$ are a convenient class of prior measures, see, e.g., \cite{Stuart2010, DashtiStuart2017,LasanenEtAl2014,DunlopEtAl2017}.
Often the mean $m \in \mc H$ and the trace-class covariance operator $C \colon \mc H \to \mc H$ are chosen themselves from parametric classes.
For instance, we may suppose a linear model for the mean $m = \sum_{j=1}^J \theta_j \phi_j$, $\phi_j \in \mc H$, with parameter $\theta = (\theta_1,\ldots, \theta_J)  \in \bbR^J$.
Or, furthermore, we may consider Gaussian prior measures on suitable function spaces $\mc H \subseteq L^2(D)$ with covariance operators belonging to the Mat\'ern class, i.e., $C = C_{\alpha, \beta,\gamma} = \beta(I + \gamma^2 \Delta)^{-\alpha}$ with parameters $\alpha, \beta, \gamma > 0$ and $\Delta$ denoting the Laplace operator, see, e.g., \cite{LasanenEtAl2014,DunlopEtAl2017}.
Since in practice the so-called hyperparameters $\theta$, $\alpha$, $\beta$, and $\gamma$ are often estimated by statistical procedures, a stability of the resulting posterior measure w.r.t.~slightly different means, covariances, or hyperparameters of the corresponding Gaussian prior measures seems highly desirable.
Therefore, we remark in each of the following sections on particular bounds for posteriors resulting from perturbed Gaussian priors.

\begin{exam}\label{exam:one}
A model problem in Bayesian inverse problems is to infer a log conductivity coefficient $u \in C(D)$, $D\subset \bbR^k$, of, e.g., a subsurface layer by noisy measurements of the corresponding potential $p \in H^1(D)$ which is typically described by an elliptic partial differential equation on $D$ of the form
\[
	- \nabla \cdot (\exp(u) \nabla p) = f,
	\qquad
	p\big|_{\partial D} = g,
\]
with suitable source term $f \in L^2(D)$ and boundary data $g\in H^{1/2}(\partial D)$.
In practice there are often (only) finitely many local observations of $p$ as well as of $u$ available, e.g., by measurements at borehole locations.
The observations of $u$ can then be used to derive a Gaussian prior measure $\mu=N(m,C)$ on $\mc H\coloneqq L^2(D)\supset C(D)$ for the (regularizing) Bayesian approach to the underdetermined inverse problem.
However, as explained above the construction of the prior $\mu$ usually involves statistical estimation of hyperparameters for the mean $m$ and the covriance $C$ employing the noisy observations of $u$.
For instance, for Mat\'ern covariances the hyperparameters can be estimated by maximizing the corresponding Gaussian likelihood of the observed values of $u$.
Therefore, also the solution of the Bayesian inverse problem, the posterior measure $\mu_\Phi$ for $u$ resulting from conditioning on the noisy data of $p$, will depend on the accuracy of this statistical estimation.
Our results establish a local Lipschitz dependence of the resulting posterior on the chosen prior, thus, controlling the effect of (inaccurately estimated) priors on posterior decisions. 
\end{exam}

\begin{rem}[Unbounded likelihoods]
Our setting excludes Bayesian inverse problems with unbounded likelihood functions $\e^{-\Phi}\colon E\to(0,\infty)$.
Such likelihoods can appear in the additive noise model described above if the probability density function of the noise distribution $\nu_\varepsilon$ is not bounded---for instance, if one component of $\varepsilon$ follows a chi-squared distribution with degree of freedom one or a beta distribution with both shape parameters equal to one half.
Moreover, also the setting of infinite-dimensional data is excluded: Consider an observable $Y \coloneqq G(X) + \varepsilon$ where $G\colon E \to \mc H$ is a measurable mapping into an infinite-dimensional separable Hilbert space $\mc H$ and $\varepsilon \sim N(0, Q)$ is a mean-zero Gaussian random variable in $\mc H$ with covariance operator $Q\colon \mc H\to\mc H$. 
If the range of $G$ is a subset of the range of $Q$, i.e., $\rg G \subset \rg Q$, then the resulting posterior for $X\sim \mu$ given a realization $Y = y$, $y\in\mc H$, can be shown to be of the form \eqref{equ:mu}, cf. \cite[Section 3.3]{DashtiStuart2017}.
In this case $\Phi$ is given by 
\[
	\Phi(x) \coloneqq \frac 12 \|Q^{-1/2}G(x)\|_{\mc H}^2 - \langle Q^{-1}G(x), y\rangle_{\mc H},
	\qquad
	x \in E,
\]
where $\langle \cdot, \cdot\rangle_{\mc H}$ denotes the inner product in $\mc H$ and where we assume that $\exp(-\Phi) \in L^1_\mu(\bbR)$ for the moment. 
However, if $y \notin \rg Q^{1/2}$, then $\Phi(x)$ can, in general, not be bounded from below---since the lower bound is, formally, $- \frac 12 \|Q^{-1/2}y\|_{\mc H}$.

On the other hand, more general noise models than additive noise are covered. 
For instance, if $G\colon E\to\bbR$ is measurable and we can observe $Y \coloneqq \varepsilon G(X)$ with $\varepsilon \sim N(0,\sigma^2)$ being stochastically independent of $X\sim \mu$, then the resulting posterior given $Y=y$ is again of the form \eqref{equ:mu} with $\Phi(x) = \frac 12 y^2/(\sigma^2 G^2(x)) \geq 0$.
\end{rem}

\paragraph{Main Results.}
We are interested in the stability of the posterior measure $\muN\in\mc P(E)$ w.r.t.~perturbations of the negative log-likelihood function $\Phi\colon E \to [0,\infty)$ and the prior measure $\mu\in\mc P(E)$. 
The former includes for $\Phi(x) = \ell(y - G(x))$ perturbations of the observed data $y \in \bbR^n$ or the forward map $G$, e.g., due to numerical approximations of $G$, cf.~Remark \ref{rem:data_G} below.
We then bound the difference between the original posterior $\muN$ and two kinds of perturbed posteriors:
\begin{enumerate}
\item
perturbed posteriors $\muA \in \mc P(E)$ resulting from perturbed log-likelihood functions $\widetilde \Phi \colon E \to \bbR$, i.e., 
\begin{equation}\label{equ:mu_tilde_1}
	\muA(\d x) 
	\coloneqq \frac1{\widetilde Z} \, \e^{- \widetilde \Phi(x)} \; \mu(\d x),
	\qquad
	\widetilde Z \coloneqq \int_{E} \e^{- \widetilde \Phi(x)} \; \mu(\d x),
\end{equation}

\item
perturbed posteriors $\muB \in \mc P(E)$ resulting from perturbed prior measures $\widetilde \mu \in \mc P(E)$, i.e.,
\begin{equation}\label{equ:mu_tilde_2}
	\muB(\d x) 
	\coloneqq \frac1{\widetilde Z} \, \e^{- \Phi(x)} \; \widetilde \mu(\d x),
	\qquad
	\widetilde Z \coloneqq \int_{E} \e^{- \Phi(x)} \; \widetilde \mu(\d x).
\end{equation}
\end{enumerate}
In particular, we prove the local Lipschitz continuity of $\muN \in \mc P(E)$ w.r.t.~the log-likelihood $\Phi \in L^p_{\mu}(\bbR_+)$ and the prior $\mu \in \mc P(E)$ in several common distances and divergences for probability measures $d\colon \mc P(E)\times \mc P(E) \to [0,\infty]$.
Here, $L^p_{\mu}(\bbR_+)$ denotes the set of non-negative functions which are $p$-integrable w.r.t.~the prior measure $\mu$.
For $d$ we consider the total variation distance, the Hellinger distance, the Kullback--Leibler divergence, and the Wasserstein distance.
Given suitable assumptions our results are then of the following form:

\begin{enumerate}
\item
For a given prior $\mu \in \mc P(E)$ and log-likelihoods $\Phi, \widetilde \Phi \in L^p_{\mu}(\bbR_+)$ with suitable $p\in\bbN$, there exists a constant $C_{\mu,\Phi} < \infty$ and a $q \in \bbN$ such that
\begin{equation*}
	d(\muN, \muA)
	\leq
	\frac{C_{\mu,\Phi}}{\min(Z,\widetilde Z)^q} \,
	\|\Phi - \widetilde \Phi\|_{L^p_{\mu}}.
\end{equation*}
This yields a local Lipschitz continuity as follows: For any $\Phi \in L^p_{\mu}(\bbR_+)$ and any radius $r>0$ there exists again a constant $C_{\mu,\Phi}(r) < \infty$ such that
\begin{equation}\label{equ:cont_Phi}
	d(\muN, \muA)
	\leq
	C_{\mu, \Phi}(r) \, \|\Phi - \widetilde \Phi\|_{L^p_{\mu}}
	\qquad
	\forall \widetilde \Phi\in L^p_{\mu}(\bbR_+)\colon \|\widetilde \Phi - \Phi\|_{L^p_{\mu}} \leq r.
\end{equation}
The particular values for $C_{\mu, \Phi}(r)$ and $p$ for each of the studied distances and divergences are given in Table \ref{tab:Phi} where for the Wasserstein distance we require $|\mu|_{\mc P^2}^2 \coloneqq \inf_{x_0\in E} \int_E d_E^2(x,x_0) \, \mu(\d x) < \infty$.

\begin{table}[t]
\begin{center}
\begin{tabular}{lcccc}
\toprule[1.25pt]
$d$ & Total Variation & Hellinger & Kullback--Leibler & $1$-Wasserstein\\ \midrule
$C_{\mu,\Phi}(r)$ & $Z^{-1}$ & $\exp(\|\Phi\|_{L^1_\mu}+r)$ & $2\exp(\|\Phi\|_{L^1_\mu}+r)$ & $2|\mu|_{\mc P^2}\exp(2\|\Phi\|_{L^1_\mu}+2r)$\\
$p$ & $1$ & $2$ & $1$ & $2$\\
\bottomrule[1.25pt]
\end{tabular}
\end{center}
\caption{\label{tab:Phi} Local Lipschitz constant $C_{\mu,\Phi}(r)$ of the mapping $L^p_{\mu}(\bbR_+)\ni\Phi \mapsto \mu_\Phi \in (\mc P(E),d)$ in \eqref{equ:cont_Phi} w.r.t.~various distances and divergences $d$; here $r>0$ denotes the radius of the local neighborhood of $\Phi$ in $L^p_{\mu}(\bbR_+)$.}
\end{table}

\item
Similarly, for a given measurable $\Phi\colon E\to[0,\infty)$ and suitable priors $\mu,\widetilde \mu \in \mc P(E)$ there exists a constant $C_{\mu,\Phi} < \infty$ and a $q \in \bbN$ such that
\begin{equation*}
	d(\muN, \muB)
	\leq
	\frac{C_{\mu,\Phi}}{\min(Z,\widetilde Z)^q} \,
	\frac{d(\mu, \widetilde \mu) + d(\widetilde \mu, \mu)}{2},
\end{equation*}
where $\frac{d(\mu, \widetilde \mu) + d(\widetilde \mu, \mu)}{2} \neq d(\mu, \widetilde \mu)$ in case of the Kullback--Leibler divergence.
Again, this yields a local Lipschitz continuity: For any $\mu \in \mc P(E)$ there exists for each radius $0< r < R_{\mu,\Phi}$ a constant $C_{\mu,\Phi}(r) < \infty$ such that
\begin{equation}\label{equ:cont_mu}
	d(\muN, \muB)
	\leq
	C_{\mu, \Phi}(r) \, \frac{d(\mu, \widetilde \mu) + d(\widetilde \mu, \mu)}{2}
	\qquad
	\forall\, \mathrm{suitable}\, \widetilde \mu\colon d(\mu,\widetilde \mu)\leq r.
\end{equation}
The particular values for $C_{\mu, \Phi}(r)$ and the radius bound $R_{\mu,\Phi}$ as well as corresponding conditions for each of the studied distances and divergences are given in Table \ref{tab:mu}---here, $\Lip(\e^{-\Phi})$ denotes the global Lipschitz constant of $\e^{-\Phi}\colon E\to(0,1]$ w.r.t.~$d_E$.

\begin{table}[t]
\begin{center}
\begin{tabular}{lcccc}
\toprule[1.25pt]
$d$ & Total Variation & Hellinger & Kullback--Leibler & $1$-Wasserstein.\\
\midrule
$C_{\mu,\Phi}(r)$ & $2/Z$ & $2/(Z-2r) $ & $2/(Z-\sqrt{2r}) $ & $(1+D\Lip(\e^{-\Phi}))^2/(Z-\Lip(\e^{-\Phi})r)$\\
$R_{\mu,\Phi}$ & $+\infty$ & $Z/2$ & $Z^2/2$ & $Z/\Lip(\e^{-\Phi})$\\
\midrule
Conditions & --- & --- & $\mu, \widetilde \mu$ equivalent & $d_E$ bounded by $D$, $\e^{-\Phi}$ Lipschitz\\
\bottomrule[1.25pt]
\end{tabular}
\end{center}
\caption{\label{tab:mu} Local Lipschitz constant $C_{\mu,\Phi}(r)$ of the mapping $\mu \mapsto \mu_\Phi$ in \eqref{equ:cont_mu} w.r.t.~various distances and divergences $d$ and necessary conditions for \eqref{equ:cont_mu} to hold; here $R_{\mu,\Phi}$ denotes the upper limit for the radius $r$ of the local neighborhood of $\mu$ w.r.t.~$d$.}
\end{table}

\item
In both cases (i) and (ii) the estimated local Lipschitz constant $C_{\mu, \Phi}(r)$ grows as the normalization constant $Z$ of $\muN$ decays or $\|\Phi\|_{L^1_\mu}$ increases, respectively.
For the local Lipschitz dependence on the prior we even have that the maximal local neighborhood of $\mu$, for which our Lipschitz bound holds, shrinks as $Z\to0$.
Thus, the sensitivity w.r.t.~perturbtations of prior or log-likelihood increases, in general, if the normalization constant $Z$ of $\muN$ decreases due to, for instance, a higher concentration or localization of the posterior measure.
\end{enumerate}
Thus, besides positive stability results our bounds suggest an increasing sensitivity of the posterior w.r.t.~perturbations of the prior or log-likelihood for increasingly informative $\Phi$, e.g., due to more or more precise observations employed in the Bayesian inference.

\begin{rem}\label{rem:PM_bounds}
The considered total variation distance $d_\mathrm{TV}$, Hellinger distance $d_\mathrm{H}$, and Kullback--Leibler divergence $d_\mathrm{KL}$ are related in the following way:
\begin{equation}\label{equ:PM_bounds}
	\frac 12 d^2_\mathrm{H}(\mu,\widetilde \mu) 
	\leq d_\mathrm{TV}(\mu,\widetilde \mu) 
	\leq d_\mathrm{H}(\mu,\widetilde \mu) 
	\leq \sqrt{d_\mathrm{KL}(\mu||\widetilde \mu)},
	\qquad
	\mu,\widetilde \mu \in \mc P(E),
\end{equation}
see \cite{GibbsSu2002}. Moreover, on bounded metric spaces $(E,d_E)$ with $d_E \leq D$, $D\in\bbR$, we have for the $1$-Wasserstein distance $\W_1(\mu,\widetilde \mu) \leq D\, d_\mathrm{TV}(\mu,\widetilde \mu) $, see \cite{GibbsSu2002}. 
However, the established local Lipschitz bounds in Table \ref{tab:Phi} and \ref{tab:mu} are derived by studying each distance individually, since this allowed for sharper estimates.
For example, a Lipschitz continuity of $\Phi \mapsto \mu_\Phi$ in $d_\mathrm{KL}$ would only imply a H\"older continuity in $d_\mathrm{TV}$ or $d_\mathrm{H}$ by \eqref{equ:PM_bounds}.
\end{rem} 

\begin{rem}\label{rem:data_G}
The bound $d(\muN, \muA) \leq C_{\mu, \Phi}(r) \|\Phi - \widetilde \Phi\|_{L^p_{\mu}}$ can usually be used to prove local Lipschitz continuous dependence of the posterior on the data or show stability w.r.t.~numerical approximations, say $G_h$, of the forward map $G\colon E \to \bbR^n$. 
For example, for an additive Gaussian noise $\varepsilon \sim N(0,\Sigma)$ we can set $\widetilde \Phi(x) \coloneqq \frac 12 |y- G_h(x)|^2$ and obtain
\[
	|\Phi(x) - \widetilde \Phi(x)|
	\leq
	\left(2|y|_{\Sigma^{-1}} + |G(x)|_{\Sigma^{-1}} + |G_h(x)|_{\Sigma^{-1}} \right)
	\left| G(x) - G_h(x) \right|.
\]
If $G, G_h \in L^{2p}_{\mu}(\bbR^n)$, then the Cauchy--Schwarz inequality yields
\[
	d(\muN, \muA)
	\leq
	C_{\mu, \Phi}(r)
	C_{\Sigma^{-1}}
	\left(|y| + \|G\|_{L^{2p}_{\mu}} + \|G_h\|_{L^{2p}_{\mu}}\right)
	\|G - G_h\|_{L^{2p}_{\mu}}.
\]
Analogous expressions can be obtained for the case of perturbed data $\widetilde y\in\bbR^n$, i.e.,  for $\widetilde \Phi(x) \coloneqq \frac 12 |\widetilde y- G(x)|^2$.
\end{rem}

%
%
\section{Stability in Hellinger and Total Variation Distance}
\label{sec:hellinger}
First, we study the continuity of the posterior measure $\muN\in\mc P(E)$ w.r.t.~the log-likelihood function $\Phi \colon E \to [0,\infty)$ and the prior measure $\mu\in\mc P(E)$ in the Hellinger distance
\[
	d^2_\mathrm{H}(\mu, \widetilde \mu)
	\coloneqq
	\int_{E} \left( \sqrt{\frac{\d\mu}{\d\nu}(x)} - \sqrt{\frac{\d\widetilde \mu}{\d\nu}(x)} \right)^2 \nu(\d x),
	\qquad
	\mu,\widetilde \mu \in \mc P(E).
\]
Here, $\nu$ denotes an arbitrary measure on $E$ dominating $\mu$ and $\widetilde \mu$, e.g., $\nu = \frac12\mu + \frac12\widetilde \mu$.
The Hellinger distance is topologically equivalent to the total variation distance
\[
	d_\mathrm{TV}(\mu, \widetilde \mu)
	\coloneqq
	\sup_{A \in \mc E} |\mu(A) - \widetilde \mu(A)|
	=
	\frac 12
	\int_{E} \left|\frac{\d\mu}{\d\nu}(x) - \frac{\d\widetilde \mu}{\d\nu}(x) \right| \nu(\d x),
\]
see \eqref{equ:PM_bounds},  
but it yields also continuity of the moments of square-integrable functions, see \cite[Theorem 21]{DashtiStuart2017}.
We investigate now the stability of the posterior $\muN$ w.r.t.~$\Phi$ in Hellinger distance.
The related issue of stability w.r.t.~the data $y$ and numerical approximations $\Phi_h$ of $\Phi$, cf. Remark \ref{rem:data_G}, was already established for Bayesian inverse problems with additive Gaussian noise and a Gaussian prior $\mu$ by Stuart \cite{Stuart2010} and recently extended by Dashti and Stuart \cite{DashtiStuart2017} and Latz \cite{Latz2019} to a more general setting.
Moreover, in \cite[Section 4]{StuartTeckentrup2018} we already find a similar result under slightly different assumptions.
Nonetheless, we state the theorem and proof for completeness.

\begin{theo}\label{theo:hell_Phi}
Let $\mu \in \mc P(E)$ and $\Phi, \widetilde \Phi\colon  E \to \bbR$ belong to $L^2_{\mu}(\bbR)$ with $\essinf_{\mu} \Phi = 0$.
Then, we have for the two probability measures $\muN, \muA \in \mc P(E)$ given by \eqref{equ:mu} and \eqref{equ:mu_tilde_1}, respectively,
\begin{equation}\label{bayes:equ:mu_Phi_cont}
	d_\mathrm{H}(\muN, \muA) \leq \frac{\exp(- [\essinf_{\mu} \widetilde \Phi]_- )}{\min(Z,\widetilde Z)}\,\|\Phi - \widetilde \Phi\|_{L^2_{\mu}},
\end{equation}
where $[t]_- \coloneqq \min(0,t)$ for $t\in\bbR$.
\end{theo}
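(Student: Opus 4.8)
The plan is to exploit that both $\muN$ and $\muA$ are absolutely continuous with respect to the common prior $\mu$, so I may take $\nu = \mu$ as the dominating measure in the definition of the Hellinger distance. This turns the quantity to be estimated into
\[
	d_\mathrm{H}^2(\muN, \muA)
	= \int_E \left( \frac{1}{\sqrt Z}\,\e^{-\Phi(x)/2} - \frac{1}{\sqrt{\widetilde Z}}\,\e^{-\widetilde\Phi(x)/2} \right)^2 \mu(\d x).
\]
The standard device (as in \cite{Stuart2010, DashtiStuart2017}) is to insert a mixed term, writing the integrand's base as $\frac{1}{\sqrt Z}(\e^{-\Phi/2} - \e^{-\widetilde\Phi/2}) + \e^{-\widetilde\Phi/2}\bigl(\frac{1}{\sqrt Z} - \frac{1}{\sqrt{\widetilde Z}}\bigr)$ and applying $(a+b)^2 \leq 2a^2 + 2b^2$. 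This splits the estimate into a \emph{likelihood-difference} term $\frac{2}{Z}\int_E(\e^{-\Phi/2}-\e^{-\widetilde\Phi/2})^2\,\mu(\d x)$ and a \emph{normalization-difference} term $2\bigl(\frac1{\sqrt Z}-\frac1{\sqrt{\widetilde Z}}\bigr)^2\,\widetilde Z$.

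For the first term I would use the elementary mean-value bound $|\e^{-a/2}-\e^{-b/2}| \leq \tfrac12\,\e^{-\min(a,b)/2}\,|a-b|$, valid because $t\mapsto\e^{-t/2}$ has derivative of modulus $\tfrac12\e^{-t/2}$, which is decreasing in $t$. The crucial point is to control $\min(\Phi(x),\widetilde\Phi(x))$ from below: since $\essinf_\mu\Phi = 0$ and $\widetilde\Phi \geq \essinf_\mu\widetilde\Phi$ $\mu$-almost everywhere, one has $\min(\Phi,\widetilde\Phi) \geq \min(0,\essinf_\mu\widetilde\Phi) = [\essinf_\mu\widetilde\Phi]_-$ almost everywhere, so the prefactor is bounded by $\tfrac12\e^{-[\essinf_\mu\widetilde\Phi]_-/2}$ uniformly in $x$. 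Integrating then yields $\int_E(\e^{-\Phi/2}-\e^{-\widetilde\Phi/2})^2\,\mu(\d x) \leq \tfrac14\,\e^{-[\essinf_\mu\widetilde\Phi]_-}\,\|\Phi-\widetilde\Phi\|_{L^2_\mu}^2$.

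For the normalization-difference term I would observe that $\sqrt Z = \|\e^{-\Phi/2}\|_{L^2_\mu}$ and $\sqrt{\widetilde Z} = \|\e^{-\widetilde\Phi/2}\|_{L^2_\mu}$, so the reverse triangle inequality gives $|\sqrt Z - \sqrt{\widetilde Z}| \leq \|\e^{-\Phi/2} - \e^{-\widetilde\Phi/2}\|_{L^2_\mu}$; combined with the identity $\bigl(\frac1{\sqrt Z}-\frac1{\sqrt{\widetilde Z}}\bigr)^2\widetilde Z = (\sqrt Z - \sqrt{\widetilde Z})^2/Z$, this reduces the second term to the same integral already controlled above, now divided by $Z$. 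Adding the two contributions gives $d_\mathrm{H}^2(\muN,\muA)\le \frac1Z\,\e^{-[\essinf_\mu\widetilde\Phi]_-}\|\Phi-\widetilde\Phi\|_{L^2_\mu}^2$, and taking square roots and relaxing $\sqrt Z \ge Z$ (valid since $\Phi\ge 0$ forces $Z\le 1$) together with $\e^{-[\essinf_\mu\widetilde\Phi]_-/2}\le \e^{-[\essinf_\mu\widetilde\Phi]_-}$ yields the stated bound; replacing $Z$ by $\min(Z,\widetilde Z)$ only weakens it and renders the estimate symmetric. The one place demanding care is the almost-everywhere lower bound on $\min(\Phi,\widetilde\Phi)$ that pins down the exponential constant through $[\essinf_\mu\widetilde\Phi]_-$ — everything else is routine manipulation of $L^2_\mu$-norms.
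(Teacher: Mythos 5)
Your proposal is correct, and its first half coincides with the paper's own argument: the same insertion of the mixed term $\e^{-\widetilde\Phi/2}/\sqrt{Z}$, the same splitting via $(a+b)^2\le 2a^2+2b^2$ into a likelihood-difference term and a normalization-difference term, and the same elementary bound $|\e^{-t}-\e^{-s}|\le \e^{-\min(t,s)}|t-s|$ combined with $\min(\Phi,\widetilde\Phi)\ge[\essinf_\mu\widetilde\Phi]_-$ a.e.\ for the first term. Where you genuinely diverge is the normalization term: the paper bounds $|\sqrt{Z}-\sqrt{\widetilde Z}|$ through $|t^{1/2}-s^{1/2}|\le\frac12\min(t,s)^{-1/2}|t-s|$ followed by $|Z-\widetilde Z|\le \e^{-[\essinf_\mu\widetilde\Phi]_-}\|\Phi-\widetilde\Phi\|_{L^1_\mu}$, which is what forces the factor $\min(Z,\widetilde Z)$ into the denominator and requires relaxing $L^1_\mu$ to $L^2_\mu$ at the end; you instead observe $\sqrt{Z}=\|\e^{-\Phi/2}\|_{L^2_\mu}$, $\sqrt{\widetilde Z}=\|\e^{-\widetilde\Phi/2}\|_{L^2_\mu}$ and apply the reverse triangle inequality, which reduces the second term to the \emph{same} integral already controlled in the first step. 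This is cleaner and in fact sharper: you obtain the intermediate bound $d_\mathrm{H}(\muN,\muA)\le \e^{-[\essinf_\mu\widetilde\Phi]_-/2}\,Z^{-1/2}\,\|\Phi-\widetilde\Phi\|_{L^2_\mu}$, which never involves $\min(Z,\widetilde Z)$ or the $L^1_\mu$-norm at all, and you only weaken it (via $\sqrt{Z}\ge Z$, $Z\le 1$, and $\min(Z,\widetilde Z)\le Z$) to recover the stated symmetric form. What the paper's route buys instead is the standalone estimate $|Z-\widetilde Z|\le \e^{-[\essinf_\mu\widetilde\Phi]_-}\|\Phi-\widetilde\Phi\|_{L^1_\mu}$, which it reuses in the subsequent Kullback--Leibler and Wasserstein theorems; your route does not produce that byproduct, but as a proof of this theorem it is complete and slightly stronger.
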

\begin{proof}
Analogously to the proof of \cite[Theorem 4.6]{Stuart2010} or \cite[Theorem 4.2]{StuartTeckentrup2018}  we start with 
\begin{eqnarray*}
	d_\mathrm{H}(\muN, \muA)^2
	& = & \int_{E}\left( \frac{\e^{-\Phi(x)/2}}{\sqrt{Z}} -\frac{\e^{-\widetilde \Phi(x)/2}}{\sqrt{\widetilde Z}}\right)^2 \, \mu(\d x)\\
	& \leq 2 & \int_{E}\left( \frac{\e^{-\Phi(x)/2}}{\sqrt{Z}} - \frac{\e^{-\widetilde \Phi(x)/2}}{\sqrt{Z}}\right)^2 + \left( \frac{\e^{-\widetilde \Phi(x)/2}}{\sqrt{Z}} - \frac{\e^{-\widetilde \Phi(x)/2}}{\sqrt{\widetilde Z}}\right)^2 \, \mu(\d x)
	= I_1 + I_2,
\end{eqnarray*}
where
\[
	I_1
	\coloneqq \frac 2Z \int_{E} \left( \e^{-\Phi(x)/2} - \e^{-\widetilde \Phi(x)/2} \right)^2 \, \mu(\d x),
	\quad
	I_2
	\coloneqq 
	2 \widetilde Z \left(\frac 1{\sqrt{Z}}-\frac 1{\sqrt{\widetilde Z}}\right)^2
	=
	\frac2Z \left( \sqrt{\widetilde Z} - \sqrt{Z}\right)^2.
\]
Since $|\e^{-t} - \e^{-s}| = \e^{-\min(t,s)} |1 - \e^{-|t-s|}| \leq \e^{-\min(t,s)} |t-s|$ for any $t,s\in\bbR$, we obtain
\[
	I_1
	\leq \frac {2\exp(- [\essinf_{\mu} \widetilde \Phi]_- )}{Z} \int_{E} \frac{\left| \Phi(x) - \widetilde \Phi(x) \right|^2}{4} \, \mu(\d x) 
	= \frac {\exp(- [\essinf_{\mu} \widetilde \Phi]_- )}{2Z} \|\Phi-\widetilde \Phi\|_{L^2_{\mu}}^2
\]
and since $|t^{1/2} - s^{1/2}|\leq \frac 12 \min(t,s)^{-1/2} \, |t-s|$ for $t,s>0$ we have
\[
	I_2
	\leq \frac{1}{2Z\, \min(Z, \widetilde Z)} \, |Z - \widetilde Z|^2.
\]
Now, as for $I_1$ we obtain
\[
	|Z - \widetilde Z|
	\leq
	\int_{E} \left| \e^{-\Phi(x)} - \e^{-\widetilde \Phi(x)} \right| \, \mu(\d x)
	\leq \exp(- [\essinf_{\mu}\widetilde \Phi]_- ) \|\Phi-\widetilde \Phi\|_{L^1_{\mu}}
\]
and due to $Z\leq 1$ we have
\[
	\frac 1{2Z} + \frac{1}{2Z\min(Z, \widetilde Z)} 
	\leq \frac 1{2\min(Z, \widetilde Z)^2} + \frac{1}{2\min(Z, \widetilde Z)^2} 
	= \frac1{\min(Z, \widetilde Z)^2}
\]
which concludes the proof.
\end{proof}

For the term $\min(Z,\widetilde Z)$ appearing in the estimate \eqref{bayes:equ:mu_Phi_cont} we provide the following lower bound.
\begin{propo}\label{propo:Z_bound}
Let $\mu \in \mc P(E)$ and $\Phi, \widetilde \Phi\colon  E \to \bbR$ belong to $L^1_{\mu}(\bbR)$.
Considering the probability measures $\muN, \muA \in \mc P(E)$ given by \eqref{equ:mu} and \eqref{equ:mu_tilde_1} we have 
\begin{equation}\label{equ:Z_bound}
	\min(Z,\widetilde Z)
	\geq
	\exp\left(- \|\Phi\|_{L^1_\mu} - \|\Phi - \widetilde \Phi\|_{L^1_\mu}\right).
\end{equation}
\end{propo}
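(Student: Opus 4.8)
The plan is to estimate the two normalisation constants $Z$ and $\widetilde Z$ separately from below, in each case by a single application of Jensen's inequality to the convex function $t \mapsto \e^{-t}$ and the probability measure $\mu$. No more sophisticated tool is needed: the whole statement is really just two instances of Jensen together with the elementary bound of a signed integral by the $L^1$-norm.

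For $Z$, convexity of $t\mapsto\e^{-t}$ and the fact that $\mu$ is a probability measure give $Z = \int_E \e^{-\Phi}\,\d\mu \ge \exp\bigl(-\int_E \Phi\,\d\mu\bigr)$, which is legitimate because $\Phi\in L^1_\mu(\bbR)$, so the integral in the exponent is finite. To pass from the signed integral $\int_E \Phi\,\d\mu$ to the norm appearing in \eqref{equ:Z_bound} I would use $\int_E \Phi\,\d\mu \le \int_E |\Phi|\,\d\mu = \|\Phi\|_{L^1_\mu}$, whence $Z \ge \exp(-\|\Phi\|_{L^1_\mu})$.

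For $\widetilde Z$ the same Jensen step yields $\widetilde Z \ge \exp\bigl(-\int_E \widetilde\Phi\,\d\mu\bigr)$. Writing $\widetilde\Phi = \Phi + (\widetilde\Phi - \Phi)$ and bounding each summand by its absolute value gives $\int_E \widetilde\Phi\,\d\mu \le \|\Phi\|_{L^1_\mu} + \|\Phi - \widetilde\Phi\|_{L^1_\mu}$, so that $\widetilde Z \ge \exp(-\|\Phi\|_{L^1_\mu} - \|\Phi - \widetilde\Phi\|_{L^1_\mu})$.

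Finally I would combine the two estimates. Since the perturbation term $\|\Phi - \widetilde\Phi\|_{L^1_\mu}$ is nonnegative, the lower bound obtained for $\widetilde Z$ is no larger than the one for $Z$, so taking the minimum over $\{Z,\widetilde Z\}$ reproduces exactly the right-hand side of \eqref{equ:Z_bound}. I do not expect any genuine obstacle; the only point deserving care is that here $\Phi$ and $\widetilde\Phi$ are assumed merely integrable and \emph{not} nonnegative (in contrast to the main theorems), which is precisely why I route the estimate through $\int_E \Phi\,\d\mu \le \int_E |\Phi|\,\d\mu$ instead of reading the $L^1$-norm off the exponent directly.
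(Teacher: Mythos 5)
Your proof is correct and follows essentially the same route as the paper: Jensen's inequality applied to $t\mapsto\e^{-t}$ for each of $Z$ and $\widetilde Z$, followed by a triangle-inequality-type bound $\int_E \widetilde\Phi\,\d\mu \leq \|\Phi\|_{L^1_\mu} + \|\Phi-\widetilde\Phi\|_{L^1_\mu}$ to express the estimate for $\widetilde Z$ in terms of $\Phi$ and the perturbation. Your explicit handling of the sign issue (routing through $\int_E \Phi\,\d\mu \le \|\Phi\|_{L^1_\mu}$ since $\Phi$ is not assumed nonnegative here) is a welcome clarification of a point the paper leaves implicit, but it does not change the argument.
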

\begin{proof}
By Jensen's inequality and the convexity of $t \mapsto \exp(-t)$ we have
\[
	Z = \int_E \exp(-\Phi(x)) \ \mu(\d x)
	\geq
	\exp\left( - \int_E \Phi(x) \ \mu(\d x)\right)
	\geq
	\exp\left(- \|\Phi\|_{L^1_\mu}\right).	
\]
Analogously, $\widetilde Z \geq \exp\left(- \|\widetilde\Phi\|_{L^1_\mu}\right)$.	
The statment follows by the triangle inequality.
\end{proof}
Combining Proposition \ref{propo:Z_bound} and Theorem \ref{theo:hell_Phi} we obtain for a given prior $\mu \in \mc P(E)$ the \emph{local Lipschitz continuity} of the mapping $L^2_\mu(\bbR_+) \ni \Phi \mapsto \mu_\Phi \in \mc P(E)$ w.r.t.~the Hellinger distance.
In particular, for a given $\Phi \in L^2_\mu(\bbR_+)$ and any $r>0$ there exists a constant $C_{\Phi,\mu}(r) \coloneqq \exp(\|\Phi\|_{L^1_\mu} + r)<\infty$ such that
\[
	d_\mathrm{H}(\muN, \muA)
	\leq C_{\Phi,\mu}(r)\, \|\widetilde \Phi - \Phi\|_{L^2_{\mu}}
	\qquad
	\forall \widetilde \Phi\in L^2_{\mu}(\bbR_+)\colon \|\widetilde \Phi - \Phi\|_{L^2_{\mu}} \leq r.
\]

All results in the remainder of the paper will be of the form as in Theorem \ref{theo:hell_Phi}: We bound the distance of the posteriors by a constant times the distance of the log-likelihoods or priors where the constant depends on $\min(Z,\widetilde Z)$ which can be bounded uniformly for all sufficiently small perturbations.

For stability w.r.t.~different priors we get the following result.

\begin{theo}\label{theo:hell_prior}
Let $\mu,\widetilde \mu \in \mc P(E)$ and $\Phi\colon E \to [0,\infty)$ be measurable.
Then, for $\muN, \muB \in \mc P(E)$ as in \eqref{equ:mu} and \eqref{equ:mu_tilde_2}, respectively, we have
\[
	d_\mathrm{H}(\muN, \muB) \leq \frac{2}{\min(Z, \widetilde Z)}  \, d_\mathrm{H}(\mu, \widetilde \mu),
	\qquad
	|Z - \widetilde Z| \leq 2 d_\mathrm{H}(\mu, \widetilde \mu).
\]
\end{theo}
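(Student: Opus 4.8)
The plan is to mirror the structure of the proof of Theorem \ref{theo:hell_Phi}, but now the two posteriors share the same $\Phi$ while differing in their priors. First I would fix a common dominating measure $\nu$ (e.g.\ $\nu = \frac12\mu + \frac12\widetilde\mu$) and set $\rho \coloneqq \frac{\d\mu}{\d\nu}$, $\widetilde\rho \coloneqq \frac{\d\widetilde\mu}{\d\nu}$, so that the posterior densities w.r.t.\ $\nu$ are $\frac{1}{Z}\e^{-\Phi}\rho$ and $\frac{1}{\widetilde Z}\e^{-\Phi}\widetilde\rho$, giving
\[
	d_\mathrm{H}^2(\muN, \muB) = \int_E \e^{-\Phi}\left(\frac{\sqrt{\rho}}{\sqrt{Z}} - \frac{\sqrt{\widetilde\rho}}{\sqrt{\widetilde Z}}\right)^2 \nu(\d x).
\]
Inserting the intermediate term $\sqrt{\widetilde\rho}/\sqrt{Z}$ and applying $(a+b)^2 \leq 2a^2 + 2b^2$ splits this into $I_1 + I_2$, where $I_1$ controls the difference of the prior densities and $I_2$ the difference of the normalising constants, exactly as in Theorem \ref{theo:hell_Phi}.

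For $I_1 = \frac{2}{Z}\int_E \e^{-\Phi}(\sqrt\rho - \sqrt{\widetilde\rho})^2\,\nu(\d x)$, the bound $\e^{-\Phi}\leq 1$ (from $\Phi \geq 0$) immediately yields $I_1 \leq \frac{2}{Z}\,d_\mathrm{H}^2(\mu,\widetilde\mu)$. For $I_2 = \frac{2}{Z}(\sqrt{\widetilde Z} - \sqrt Z)^2$—using $\int_E \e^{-\Phi}\widetilde\rho\,\nu(\d x) = \widetilde Z$—the elementary inequality $|t^{1/2} - s^{1/2}| \leq \frac12\min(t,s)^{-1/2}|t-s|$ already employed in Theorem \ref{theo:hell_Phi} reduces $I_2$ to a multiple of $|Z - \widetilde Z|^2/\min(Z,\widetilde Z)$.

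The crucial auxiliary estimate—and the part I expect to be the main obstacle—is the second claim $|Z - \widetilde Z| \leq 2\,d_\mathrm{H}(\mu,\widetilde\mu)$, which also ties the two bounds together. Here I would write $Z - \widetilde Z = \int_E \e^{-\Phi}(\rho - \widetilde\rho)\,\nu(\d x)$, factor $\rho - \widetilde\rho = (\sqrt\rho - \sqrt{\widetilde\rho})(\sqrt\rho + \sqrt{\widetilde\rho})$, use $\e^{-\Phi}\leq 1$, and apply the Cauchy--Schwarz inequality. The first factor produces $d_\mathrm{H}(\mu,\widetilde\mu)$, while for the second I would exploit that the Hellinger affinity $\int_E \sqrt{\rho\widetilde\rho}\,\nu(\d x)$ is at most $1$, so that $\int_E (\sqrt\rho + \sqrt{\widetilde\rho})^2\,\nu(\d x) = 2 + 2\int_E \sqrt{\rho\widetilde\rho}\,\nu(\d x) \leq 4$, giving exactly the factor $2$. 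Getting this constant to come out sharp (rather than some larger universal constant) is the delicate point.

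Finally, I would combine the pieces: substituting $|Z - \widetilde Z| \leq 2\,d_\mathrm{H}(\mu,\widetilde\mu)$ into $I_2$ and using $Z,\widetilde Z \leq 1$ (hence $Z^{-1}\leq \min(Z,\widetilde Z)^{-1} \leq \min(Z,\widetilde Z)^{-2}$, as in the closing step of Theorem \ref{theo:hell_Phi}), both $I_1$ and $I_2$ are bounded by $\frac{2}{\min(Z,\widetilde Z)^2}\,d_\mathrm{H}^2(\mu,\widetilde\mu)$. Adding them and taking square roots yields the claimed bound $d_\mathrm{H}(\muN,\muB)\leq \frac{2}{\min(Z,\widetilde Z)}\,d_\mathrm{H}(\mu,\widetilde\mu)$.
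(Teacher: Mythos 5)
Your proposal is correct and follows essentially the same route as the paper: the same dominating-measure setup, the same splitting into $I_1+I_2$, the same bounds $I_1 \leq \frac{2}{Z}d^2_\mathrm{H}(\mu,\widetilde\mu)$ and $I_2 \leq \frac{1}{2Z\min(Z,\widetilde Z)}|Z-\widetilde Z|^2$, and the same final combination using $Z\leq 1$. The only (harmless) deviation is in the auxiliary estimate $|Z-\widetilde Z|\leq 2d_\mathrm{H}(\mu,\widetilde\mu)$: the paper obtains it by bounding $|Z-\widetilde Z|\leq 2d_\mathrm{TV}(\mu,\widetilde\mu)$ and invoking the known comparison $d_\mathrm{TV}\leq d_\mathrm{H}$ from \eqref{equ:PM_bounds}, whereas you re-derive that comparison inline by factoring $\rho-\widetilde\rho=(\sqrt\rho-\sqrt{\widetilde\rho})(\sqrt\rho+\sqrt{\widetilde\rho})$ and applying Cauchy--Schwarz together with the Hellinger-affinity bound $\int_E\sqrt{\rho\widetilde\rho}\,\nu(\d x)\leq 1$ --- a self-contained and equally sharp argument.
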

\begin{proof}
Let $\rho(x) \coloneqq \frac{\d \mu}{\d \nu}(x)$ and $\widetilde \rho(x) \coloneqq \frac{\d \widetilde \mu}{\d \nu}(x)$ denote the densities of $\mu$ and $\widetilde \mu$ w.r.t.~a dominating $\nu \in \mc P(E)$.
Then, we have
\[
	\frac{\d \muN}{\d \nu}(x) 
	= \frac{\d \muN}{\d \mu}(x) \, \frac{\d \mu}{\d \nu}(x) 
	= 
	\frac{\e^{-\Phi(x)}}{Z} \rho(x),
	\qquad
	\frac{\d \muB}{\d \nu}(x) 
	= \frac{\d \muB}{\d \widetilde \mu}(x) \, 	\frac{\d \widetilde \mu}{\d \nu}(x) 
	= \frac{\e^{-\Phi(x)}}{\widetilde Z} \widetilde \rho(x),
\]
where $Z = \int_{E} \e^{-\Phi(x)}\, \rho(x)\, \nu(\d x)$ and $\widetilde Z = \int_{E} \e^{-\Phi(x)}\, \widetilde \rho(x)\, \nu(\d x)$.
We obtain analogously to Theorem \ref{theo:hell_Phi}
\begin{eqnarray*}
	d^2_\mathrm{H}(\muN, \muB)
	& = &\int_{E} \left(\e^{-\Phi(x)/2}\sqrt{\frac{\rho(x)}{Z}} -\e^{-\Phi(x)/2}\sqrt{\frac{\widetilde \rho(x)}{\widetilde Z}}\right)^2 \, \nu(\d x)\\
	& \leq & 2 \int_{E} \e^{-\Phi(x)} \left(\frac{\sqrt{\rho(x)}}{\sqrt{Z}} - \frac{\sqrt{\widetilde \rho(x)}}{\sqrt{Z}} \right)^2 + \e^{-\Phi(x)} \left(\frac{\sqrt{\widetilde \rho(x)}}{\sqrt{Z}} - \frac{\sqrt{\widetilde \rho(x)}}{\sqrt{\widetilde Z}}\right)^2 \, \nu(\d x)\\
	& = & I_1 + I_2
\end{eqnarray*}
where
\[
	I_1
	\coloneqq \frac{2}{Z} \int_{E} \e^{-\Phi(x)} \left( \sqrt{\rho(x)} - \sqrt{\widetilde \rho(x)}\right)^2 \, \nu(\d x),
	\qquad
	I_2
	\coloneqq 
	\frac{2}{Z} \left( \sqrt{\widetilde Z} - \sqrt{Z}\right)^2.
\]
Due to $\Phi(x) \geq 0$ we get
\[
	I_1 
	\leq
	\frac 2{Z} \int_{E} \left( \sqrt{\rho(x)} - \sqrt{\widetilde \rho(x)}\right)^2 \, \nu(\d x)
	=\frac 2{Z} d^2_\mathrm{H}(\mu, \widetilde\mu).
\]
Moreover, as in the proof of Theorem \ref{theo:hell_Phi}, we have $I_2 \leq \frac 1{2Z \min(Z,\widetilde Z)} |Z-\widetilde Z|^2$ and due to \eqref{equ:PM_bounds}
\[
	|Z-\widetilde Z|
	\leq \int_{E} \e^{-\Phi(x)} |\rho(x) - \widetilde \rho(x)|\ \nu(\d x)
	\leq 2 d_\mathrm{TV}(\mu, \widetilde \mu) 
	\leq 2 d_\mathrm{H}(\mu, \widetilde \mu). 	
\]
Hence, since $Z \leq 1$ we obtain
\[
	d^2_\mathrm{H}(\muN, \muB)
	\leq 
	I_1+I_2
	\leq
	 \left(\frac 2{Z} + \frac 2{Z \min(Z, \widetilde Z)}\right)
	d^2_\mathrm{H}(\mu, \widetilde \mu)
	\leq
	 \frac{4 d^2_\mathrm{H}(\mu, \widetilde \mu)}{\min(Z,\widetilde Z)^2}.
\]
\end{proof}

Concerning the \emph{local Lipschitz continuity} of the mapping $\mc P(E) \ni \mu \mapsto \mu_\Phi \in \mc P(E)$, $\Phi\colon E\to[0,\infty)$, w.r.t.~the Hellinger distance, we obtain the following due to Theorem \ref{theo:hell_prior}: Given a $\mu \in \mc P(E)$ and a radius $0\leq r < \frac 12Z = \frac 12\int_E\e^{-\Phi}\, \d\mu$ there exists a constant $C_{\Phi,\mu}(r) \coloneqq \frac{2}{Z - 2r}$ such that 
\[
	d_\mathrm{H}(\muN, \muB)
	\leq
	C_{\Phi,\mu}(r)\, d_\mathrm{H}(\mu, \widetilde \mu)
	\qquad
	\forall \widetilde\mu\in\mc P(E)\colon d_\mathrm{H}(\mu,\widetilde\mu) \leq r,
\]
due to $\min(Z,\widetilde Z) \geq Z - |Z-\widetilde Z| \geq Z - 2d_\mathrm{H}(\mu,\widetilde\mu)$. 
Hence, in comparison to the local Lipschitz continuity of $\Phi \mapsto \mu_\Phi$ discussed above the radius of the local neighborhood of $\mu$ also depends on $\mu$.

For stability in total variation distance we could simply use the relation \eqref{equ:PM_bounds} between $d_\mathrm{TV}$ and $d_\mathrm{H}$ combined with Theorem \ref{theo:hell_Phi} and Theorem \ref{theo:hell_prior}.
However, sharper bounds are obtained by adopting the proofs of Theorem \ref{theo:hell_Phi} and Theorem \ref{theo:hell_prior},  accordingly.
\begin{theo}\label{theo:TV}
Let $\mu\in \mc P(E)$ and $\Phi, \widetilde \Phi \in L^1_\mu(\bbR)$ with $\essinf_{\mu} \Phi = 0$.
Then, we have for $\muN, \muA \in \mc P(E)$ as in \eqref{equ:mu} and \eqref{equ:mu_tilde_1}, respectively,
\[
	d_\mathrm{TV}(\muN, \muA) \leq \frac{\exp(- [\essinf_{\mu} \widetilde \Phi]_- )}{Z}\,\|\Phi - \widetilde \Phi\|_{L^1_{\mu}}.
\]
Moreover, for $\mu, \widetilde \mu\in\mc P(E)$ and measurable $\Phi\colon E\to[0,\infty)$ we have for $\muN, \muB \in \mc P(E)$ as in \eqref{equ:mu} and \eqref{equ:mu_tilde_2}, respectively,
\[
	d_\mathrm{TV}(\muN, \muB) \leq \frac{2}{Z}\,d_\mathrm{TV}(\mu, \widetilde \mu).
\]
\end{theo}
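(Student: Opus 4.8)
The plan is to mimic the two-part structure of the proofs of Theorem~\ref{theo:hell_Phi} and Theorem~\ref{theo:hell_prior}, but to work directly with the integral representation
\[
	d_\mathrm{TV}(\mu,\widetilde\mu) = \frac12\int_E \left|\frac{\d\mu}{\d\nu}(x) - \frac{\d\widetilde\mu}{\d\nu}(x)\right|\,\nu(\d x)
\]
instead of the squared Hellinger integral. The improvement in sharpness over simply combining Theorem~\ref{theo:hell_Phi}/\ref{theo:hell_prior} with \eqref{equ:PM_bounds} comes precisely from the fact that a triangle inequality applied inside a single absolute value avoids the factor-of-two loss forced by the elementary inequality $(a+b)^2\le 2a^2+2b^2$ used in the Hellinger computation.

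For the first bound I would express the integrand as the $\mu$-density of $\muN$ minus that of $\muA$ and add and subtract the intermediate term $\e^{-\widetilde\Phi}/Z$, giving
\[
	\frac{\e^{-\Phi(x)}}{Z} - \frac{\e^{-\widetilde\Phi(x)}}{\widetilde Z}
	= \frac1Z\left(\e^{-\Phi(x)} - \e^{-\widetilde\Phi(x)}\right) + \e^{-\widetilde\Phi(x)}\left(\frac1Z - \frac1{\widetilde Z}\right).
\]
Integrating the absolute value against $\mu$ and using $\int_E\e^{-\widetilde\Phi}\,\d\mu = \widetilde Z$ bounds $2\,d_\mathrm{TV}(\muN,\muA)$ by the sum of a likelihood term $\frac1Z\int_E|\e^{-\Phi}-\e^{-\widetilde\Phi}|\,\d\mu$ and a normalisation term $|Z-\widetilde Z|/Z$. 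The key observation is that $|Z-\widetilde Z|\le\int_E|\e^{-\Phi}-\e^{-\widetilde\Phi}|\,\d\mu$, so \emph{both} pieces are controlled by the same integral; the elementary bound $|\e^{-t}-\e^{-s}|\le\e^{-\min(t,s)}|t-s|$ together with $\min(\Phi,\widetilde\Phi)\ge[\essinf_\mu\widetilde\Phi]_-$ (here $\essinf_\mu\Phi=0$ enters) then produces the stated constant after dividing by two.

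For the second bound I would pass to a common dominating measure $\nu$ with densities $\rho,\widetilde\rho$, exactly as in Theorem~\ref{theo:hell_prior}, and split
\[
	\frac{\e^{-\Phi(x)}\rho(x)}{Z} - \frac{\e^{-\Phi(x)}\widetilde\rho(x)}{\widetilde Z}
	= \frac{\e^{-\Phi(x)}}{Z}\bigl(\rho(x)-\widetilde\rho(x)\bigr) + \e^{-\Phi(x)}\widetilde\rho(x)\left(\frac1Z-\frac1{\widetilde Z}\right).
\]
Here the estimate is even cleaner: since $\Phi\ge0$ we have $\e^{-\Phi}\le1$, so the density-difference contribution to $2\,d_\mathrm{TV}(\muN,\muB)$ is at most $\frac1Z\int_E|\rho-\widetilde\rho|\,\d\nu=\frac2Z\,d_\mathrm{TV}(\mu,\widetilde\mu)$, and likewise $|Z-\widetilde Z|\le\int_E\e^{-\Phi}|\rho-\widetilde\rho|\,\d\nu\le2\,d_\mathrm{TV}(\mu,\widetilde\mu)$ bounds the normalisation contribution by the same quantity. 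Summing the two and halving yields the factor $2/Z$.

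I do not expect a genuine obstacle here; the argument is elementary once the additive split is chosen. The only point requiring care is the bookkeeping of the factors $\tfrac12$ from the TV representation and the verification that $|Z-\widetilde Z|$ is dominated by the very integral already appearing in the density-difference term, so that no separate lower bound on the evidences is needed at this stage (Proposition~\ref{propo:Z_bound} is only invoked afterwards to convert these estimates into a local Lipschitz statement).
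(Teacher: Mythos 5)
Your proof is correct and is precisely the adaptation of the proofs of Theorem~\ref{theo:hell_Phi} and Theorem~\ref{theo:hell_prior} that the paper alludes to when it states Theorem~\ref{theo:TV} without proof: the same intermediate-term decomposition (perturbed likelihood resp.\ perturbed density with the original normalization $Z$), the same control $|Z-\widetilde Z|\leq\int_E|\e^{-\Phi}-\e^{-\widetilde \Phi}|\,\d\mu$ resp.\ $|Z-\widetilde Z|\leq 2d_\mathrm{TV}(\mu,\widetilde \mu)$, and the same elementary inequality $|\e^{-t}-\e^{-s}|\leq\e^{-\min(t,s)}|t-s|$, with the linear (rather than square-root) structure of the total variation integrand correctly exploited to cancel $\widetilde Z$ and avoid any $\min(Z,\widetilde Z)$ factor. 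There is nothing to correct.
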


\begin{rem}[Increasing sensitivity]\label{rem:BvM}
The bounds established in Theorem \ref{theo:hell_Phi}, \ref{theo:hell_prior}, and \ref{theo:TV} involve the inverse of the normalization constant $Z$ of $\muN$.
This suggests that Bayesian inference becomes increasingly sensitive to pertubations of the log-likelihood or prior as the posterior $\muN$ concentrates due to more or more accurate data.
This may seem counterintuitive given the well-known \emph{Bernstein--von Mises theorem} \cite{VanDerVaart1998, KleijnVanDerVaart2012} in asymptotic Bayesian statistics: Under suitable conditions the posterior measure concentrates around the true, data-generating $x^\dagger \in E$ in the large data limit.
This statement holds independently of the particular prior $\mu$ as long as $x^\dagger$ belongs to the support of the measure $\mu\in\mc P(E)$, i.e., as long as $x^\dagger \in \supp \mu $. 
However, the latter resolves the alleged contradiction: Given a suitable infinite space $E$ and a non-atomic prior $\mu \in \mc P(E)$---i.e., for each $x \in E$ we have for balls $B_r(x)\coloneqq\{y\in E\colon d_E(x,y)\leq r\}$ that $\lim_{r\to0} \mu(B_r(x)) = 0$\footnote{Similar requirements were essential for the brittleness results in \cite{OwhadiEtAl2015a, OwhadiEtAl2015b}.}---we can construct for any $\epsilon > 0$ a perturbed prior $\widetilde \mu$ with $d_\mathrm{TV}(\mu,\widetilde \mu) \leq \epsilon$ but $\widetilde \mu(B_r(x^\dagger)) = 0$ for a sufficiently small radius $r = r(\epsilon)>0$.
Thus, $\muN$ concentrates around $x^\dagger$ and $\muB$ around another $x^\star \in \supp \widetilde \mu$, see \cite{KleijnVanDerVaart2012}, and their total variation distance will tend to $1$ since $d_E(x^\dagger, x^\star)\geq r >0$. 
Similar arguments also apply to perturbations of the likelihood function, since $x^\dagger$ is typically the minimizer of the log-likelihood $\Phi$ on $\supp \mu$, and, therefore, we can construct perturbed $\widetilde \Phi$ with a different minimizer $x^\star \neq x^\dagger$ but with arbitrarily small $L^1_\mu$-distance $\|\Phi - \widetilde \Phi\|_{L^1_{\mu}}$.
Thus, it is indeed the case that Bayesian inference becomes more sensitive to perturbations of the log-likelihood or the prior as the amount of data or its accuracy increases.
This also holds for other divergences and distances, cf. Remark \ref{rem:Sens_Wasserstein}.
\end{rem}

\begin{rem}[Stability w.r.t.~Gaussian priors]\label{rem:Hell_Gauss}
Concerning Gaussian priors $\mu = N(m,C)$ and $\widetilde \mu = N(\widetilde m, \widetilde C)$ on a separable Hilbert space $\mc H$ with norm $\|\cdot\|_{\mc H}$ we can bound the Hellinger distance of the resulting posteriors by Theorem \ref{theo:hell_prior}.
In order to obtain a non-trivial bound, we require that $\mu$ and $\widetilde \mu$ are absolutely continuous w.r.t.~each other, i.e., that $m-\widetilde m \in \rg C^{1/2} = \rg \widetilde C^{1/2}$ and $C^{-1/2}\widetilde C C^{-1/2} - I$ is a Hilbert--Schmidt operator on $\mc H$, see, e.g., \cite[Corollary 6.4.11]{Bogachev1998} or \cite[Section II.3]{Kuo1975}.
Assuming furthermore that $T\coloneqq C^{-1/2}\widetilde C C^{-1/2}$ is a positive definite operator on $\mc H$, we can then use the exact expressions for the Hellinger distance of equivalent Gaussian measures:
\begin{eqnarray*}
	d^2_\mathrm{H}(N(m, C), N(\widetilde m, C))
	& = &
	2
	-
	2\exp\left(-\frac 18 \|C^{-1/2} (m-\widetilde m) \|_{\mc H}^2\right),\\
	d^2_\mathrm{H}(N(m, C), N(m, \widetilde C))
	& = &
	2
	-
	2 \left[ \det\left(\frac 12 \sqrt T + \frac 12 \sqrt{T^{-1}}\right)\right]^{-1/2}.
\end{eqnarray*}
We provide a detailed derivation of these formulas in Appendix \ref{app:Hell} and only make the following remarks here: (a) the inverse $T^{-1}$ exists and is bounded on $\mc H$, since $T$ is positive definite and $T-I$ is Hilbert--Schmidt, i.e., the smallest eigenvalue of $T$ is bounded away from zero; (b) the determinant $\det\left(\frac 12 \sqrt T + \frac 12 \sqrt{T^{-1}}\right)$ is well-defined as a Fredholm determinant, since $I - (\frac 12 \sqrt T + \frac 12 \sqrt{T^{-1}})$ is trace-class, see Appendix \ref{app:Hell}; and (c) we have $\det\left(\frac 12 \sqrt T + \frac 12 \sqrt{T^{-1}}\right)\geq1$ due to $\frac {\sqrt t}2 + \frac1{2\sqrt t} \geq 1$ for $t>0$.
If, moreover, $C^{-1/2}\widetilde C C^{-1/2} -I$ is trace class we can bound
\[
	d^2_\mathrm{TV}(\mu,\widetilde \mu) 
	\leq
	\frac 32
	\|C^{-1}\widetilde C - I\|_\mathrm{HS} + \frac 12 \|C^{-1/2}(m-\widetilde m)\|_{\mc H}
\]
where $\|A\|_\mathrm{HS} \coloneqq \sqrt{\tr(A^*A)}$ denotes the Hilbert--Schmidt norm of a Hilbert--Schmidt operator $A\colon \mc H \to \mc H$.
This bound is derived by Pinsker's inequality and means of \cite{DevroyeEtAl2019}.
Thus, using Theorem \ref{theo:TV} we can bound the total variation distance of posteriors resulting from Gaussian priors with different mean or covariance.
However, we remark that Gaussian priors on function spaces are often singular w.r.t.~each other.
For example, Gaussian priors with Mat\'ern covariance operator $C = C_{\alpha, \beta,\gamma} = \beta(I + \gamma^2 \Delta)^{-\alpha}$ \cite{LasanenEtAl2014,DunlopEtAl2017} are singular for different values of $\alpha > 0$ or $\beta>0$.
We refer to \cite{DunlopEtAl2017} for a further discussion and for a particular subclass of equivalent Gaussian priors with Mat\'ern covariance.
\end{rem} 
%
%
\section{Stability in Kullback--Leibler Divergence}
\label{sec:kld}
A common way to compare the relative information between two probability measures $\mu, \widetilde \mu \in \mc P(E)$ is to compute the \emph{Kullback--Leibler divergence} (KLD) between them, which in case of existence of $\frac{\d \mu}{\d \widetilde \mu}$ is
\[
	d_\mathrm{KL}(\mu \| \widetilde \mu)
	\coloneqq
	\int_{E} \log\left( \frac{\d \mu}{\d \widetilde \mu}(x)\right) \, \mu(\d x).
\] 
If $\mu$ is not absolutely continuous w.r.t.~$\widetilde \mu$, then $d_\mathrm{KL}(\mu \| \widetilde \mu) \coloneqq +\infty$.
The KLD is not a metric for probability measures due to the lack of symmetry and triangle inequality\footnote{There exists a metric for probability measures based on the KLD called the \emph{Jensen--Shannon distance} \cite{EndresSchindelin2003}.} but nonetheless an important quantity in information theory and optimal experimental design.
Moreover, the total variation and Hellinger distance can be bounded by the KLD, see \eqref{equ:PM_bounds}.
In particular, the well-known \emph{Pinkser's inequality} states that $2d^2_\mathrm{TV}(\mu, \widetilde \mu) \leq d_\mathrm{KL}(\mu \| \widetilde \mu)$, see \cite{GibbsSu2002}.

In the following, we study the stability of $\muN$ in terms of the KLD w.r.t.~perturbations of the log-likelihood and the prior.
Previous results in this direction were obtained by \cite{Latz2019, MarzoukXiu2009} stating a continuous dependence of $\muN$ on the data $y\in\bbR^n$ \cite{Latz2019} and a stability of $\muN$ w.r.t.~numerical approximations of the forward map $G\colon E\to\bbR^n$ \cite{MarzoukXiu2009} under suitable assumptions.

\begin{theo}\label{lem:KLD_Phi}
Let $\mu \in \mc P(E)$ and $\Phi, \widetilde \Phi\colon  E \to \bbR$ belong to $L^1_{\mu}(\bbR)$ with $\essinf_{\mu} \Phi = 0$.
Then, for the two probability measures $\muN, \muA \in \mc P(E)$ given in \eqref{equ:mu} and \eqref{equ:mu_tilde_1}, respectively,
\begin{equation}\label{bayes:equ:mu_Phi_cont}
	d_\mathrm{KL}(\muN \| \muA) \leq \frac{2\exp(- [\essinf_{\mu} \widetilde \Phi]_- )}{\min(Z,\widetilde Z)}\,\|\Phi - \widetilde \Phi\|_{L^1_{\mu}}.
\end{equation}
\end{theo}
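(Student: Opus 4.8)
The plan is to exploit the mutual absolute continuity of $\muN$ and $\muA$: both are dominated by $\mu$ with $\mu$-a.e.\ positive densities $\e^{-\Phi}/Z$ and $\e^{-\widetilde\Phi}/\widetilde Z$, so $\frac{\d\muN}{\d\muA}$ exists and equals $\frac{\widetilde Z}{Z}\,\e^{\widetilde\Phi - \Phi}$. Substituting this into the definition of the Kullback--Leibler divergence and taking the logarithm splits the divergence into two additive pieces,
\[
	d_\mathrm{KL}(\muN\|\muA) = \log\frac{\widetilde Z}{Z} + \int_E \bigl(\widetilde\Phi(x) - \Phi(x)\bigr)\,\muN(\d x),
\]
and I would bound each piece separately by $\frac{\exp(-[\essinf_{\mu}\widetilde\Phi]_-)}{\min(Z,\widetilde Z)}\,\|\Phi - \widetilde\Phi\|_{L^1_{\mu}}$, so that their sum produces exactly the claimed factor of $2$.

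For the integral term I would use that the normalization $\essinf_{\mu}\Phi = 0$ forces $\e^{-\Phi}\le 1$ $\mu$-a.e., whence $\int_E|\widetilde\Phi - \Phi|\,\d\muN = \frac1Z\int_E|\widetilde\Phi - \Phi|\,\e^{-\Phi}\,\d\mu \le \frac1Z\|\Phi - \widetilde\Phi\|_{L^1_{\mu}}$; since $\exp(-[\essinf_{\mu}\widetilde\Phi]_-)\ge 1$ and $Z\ge\min(Z,\widetilde Z)$, this is dominated by the target. This same estimate simultaneously certifies that the logarithmic density is $\muN$-integrable and that the divergence is finite, so that the additive splitting above is legitimate.

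The less routine piece is the ratio term $\log\frac{\widetilde Z}{Z}$, which I would control by the elementary inequality $\log t \le t-1$ for $t>0$: it gives $\log\frac{\widetilde Z}{Z} \le \frac{\widetilde Z - Z}{Z} \le \frac{|\widetilde Z - Z|}{\min(Z,\widetilde Z)}$, a one-sided bound valid regardless of the sign of $\widetilde Z - Z$ (when $\widetilde Z < Z$ the left-hand side is already negative, so the estimate only loosens). It then remains to bound $|Z - \widetilde Z|$, for which I would reuse verbatim the estimate from the proof of Theorem \ref{theo:hell_Phi}: from $|\e^{-t} - \e^{-s}| \le \e^{-\min(t,s)}|t-s|$ together with $\min(\Phi,\widetilde\Phi)\ge[\essinf_{\mu}\widetilde\Phi]_-$ one obtains $|Z - \widetilde Z|\le\int_E|\e^{-\Phi} - \e^{-\widetilde\Phi}|\,\d\mu \le \exp(-[\essinf_{\mu}\widetilde\Phi]_-)\,\|\Phi - \widetilde\Phi\|_{L^1_{\mu}}$. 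Adding the two bounds closes the argument. I expect the only step demanding genuine care to be this treatment of $\log\frac{\widetilde Z}{Z}$: the point is that $\log t \le t-1$ furnishes a single one-sided estimate correct in both regimes $\widetilde Z \ge Z$ and $\widetilde Z < Z$, so one need not appeal to a symmetric Lipschitz bound for the logarithm (which would in any case only be available through a lower bound on $\min(Z,\widetilde Z)$).
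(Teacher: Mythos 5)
Your proposal is correct and follows essentially the same route as the paper's proof: the same decomposition of $d_\mathrm{KL}(\muN\|\muA)$ into the log-ratio of normalization constants plus the $\muN$-integral of $\widetilde\Phi-\Phi$, the same bound $\e^{-\Phi}\leq 1$ for the integral term, and the same reuse of $|Z-\widetilde Z|\leq \exp(-[\essinf_{\mu}\widetilde\Phi]_-)\|\Phi-\widetilde\Phi\|_{L^1_{\mu}}$ from Theorem \ref{theo:hell_Phi}. The only (immaterial) difference is that you control the log-ratio via the convexity inequality $\log t\leq t-1$, whereas the paper uses the Lipschitz bound $|\log t-\log s|\leq |t-s|/\min(t,s)$; both yield the identical intermediate estimate $|Z-\widetilde Z|/\min(Z,\widetilde Z)$.
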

\begin{proof}
We have $\frac{\d \muN}{\d \muA}(x)
	= 
	\frac{\d \muN}{\d\mu}(x)
	\ \frac{\d \mu}{\d \muA}(x)
	= \frac{\widetilde Z}{Z} \e^{\widetilde \Phi(x) - \Phi(x)}$
and, thus,
\[
	d_\mathrm{KL}(\muN \| \muA)
	\leq
	|\log(\widetilde Z) - \log(Z)| + \int_{E} |\widetilde \Phi(x) - \Phi(x)| \, \frac{\e^{-\Phi(x)}}{Z}\, \mu(\d x).
\]
We further obtain
\[
	\int_{E} |\widetilde \Phi(x) - \Phi(x)| \, \frac{\e^{-\Phi(x)}}{Z}
	       \, \mu(\d x)
	\leq
	\frac 1Z \int_{E} |\widetilde \Phi(x) - \Phi(x)| \, \mu(\d x)
	= \frac{\|\Phi-\widetilde \Phi\|_{L^1_{\mu}}}{Z}
\]
as well as due to $|\log t - \log s| \leq \frac1{\min(t,s)} |t-s|$ for $t,s>0$ and $|\e^{-t}-\e^{-s}|\leq \e^{-\min(t,s)} |t-s|$ for $t,s\in \bbR$ that
\[
	|\log(\widetilde Z) - \log(Z)|
	\leq
	\frac1{\min(Z,\widetilde Z)} |Z-\widetilde Z|
	\leq \frac{\exp(- [\essinf_{\mu} \widetilde \Phi]_- )}{\min(Z,\widetilde Z)} \|\Phi-\widetilde \Phi\|_{L^1_{\mu}},
\]
cf. the proof of Theorem \ref{theo:hell_Phi} for $|Z-\widetilde Z| \leq \e^{- [\essinf_{\mu} \widetilde \Phi]_- }\|\Phi-\widetilde \Phi\|_{L^1_{\mu}},$
Hence, we end up with
\[
	d_\mathrm{KL}(\muN \| \muA)
	\leq \frac{\exp(- [\essinf_{\mu} \widetilde \Phi]_- )}{\min(Z,\widetilde Z)} \|\Phi-\widetilde \Phi\|_{L^1_{\mu}} +  \frac{\|\Phi-\widetilde \Phi\|_{L^1_{\mu}}}{Z}
	\leq \frac{2\exp(- [\essinf_{\mu} \widetilde \Phi]_- )}{\min(Z,\widetilde Z)}\|\Phi-\widetilde \Phi\|_{L^1_{\mu}}.
\]
\end{proof}

An analogous proof to Theorem \ref{lem:KLD_Phi} also yields
\begin{equation}\label{bayes:equ:mu_Phi_cont}
	d_\mathrm{KL}(\muA \| \muN) \leq \frac{2\exp(- [\essinf_{\mu} \widetilde \Phi]_- )}{\min(Z,\widetilde Z)}\,\|\Phi - \widetilde \Phi\|_{L^1_{\mu}}.
\end{equation}

Concerning the following stability statement w.r.t.~perturbed priors $\widetilde \mu \in \mc P(E)$ we restrict ourselves to $\widetilde \mu$ which are equivalent to $\mu$.
Note that this assumption is sensible, since otherwise $d_\mathrm{KL}(\mu \| \widetilde \mu)$ or $d_\mathrm{KL}(\muN \| \muB)$ would be infinite.

\begin{theo}\label{theo:KLD_prior}
Let $\mu, \widetilde \mu \in \mc P(E)$ be equivalent and $\Phi\colon E \to [0,\infty)$ be measurable.
Then, for $\muN, \muB \in \mc P(E)$ given in \eqref{equ:mu} and \eqref{equ:mu_tilde_2}, respectively, we have
\[
	d_\mathrm{KL}(\muN \| \muB)
	\leq \frac{1}{\min(Z, \widetilde Z)}  \, \left(d_\mathrm{KL}(\mu \| \widetilde \mu) + d_\mathrm{KL}(\widetilde \mu \| \mu)\right),
	\qquad
	|Z - \widetilde Z| \leq \sqrt{2 d_\mathrm{KL}(\mu \| \widetilde \mu)}.	
\]
\end{theo}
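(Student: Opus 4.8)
The plan is to mimic the structure of the proof of Theorem \ref{lem:KLD_Phi}, but now with the perturbation sitting in the base measure rather than in the exponent. First I would fix a dominating $\nu\in\mc P(E)$ (e.g.\ $\nu=\frac12\mu+\frac12\widetilde\mu$) and set $\rho\coloneqq\frac{\d\mu}{\d\nu}$, $\widetilde\rho\coloneqq\frac{\d\widetilde\mu}{\d\nu}$, exactly as in the proof of Theorem \ref{theo:hell_prior}. Since $\mu$ and $\widetilde\mu$ are equivalent we have $\rho,\widetilde\rho>0$ $\nu$-a.e.\ and $\frac{\d\muN}{\d\muB}=\frac{\widetilde Z}{Z}\,\frac{\d\mu}{\d\widetilde\mu}$ is well defined. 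Taking logarithms and integrating against $\muN$ gives the exact identity
\begin{equation*}
	d_\mathrm{KL}(\muN\|\muB)=\log\frac{\widetilde Z}{Z}+\int_E g\,\d\muN,
	\qquad g\coloneqq\log\frac{\d\mu}{\d\widetilde\mu}=\log\frac{\rho}{\widetilde\rho}.
\end{equation*}

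The decisive step is to control the two summands \emph{jointly}. For the log term I would use the elementary estimate $\log t\le t-1$ to obtain $\log\frac{\widetilde Z}{Z}\le\frac{\widetilde Z-Z}{Z}=\frac1Z\int_E\e^{-\Phi}(\widetilde\rho-\rho)\,\d\nu$, while the second term is $\int_E g\,\d\muN=\frac1Z\int_E g\,\e^{-\Phi}\rho\,\d\nu$ because $\muN(\d x)=\frac1Z\e^{-\Phi}\rho\,\nu(\d x)$. Adding them collapses everything into a single integral
\begin{equation*}
	d_\mathrm{KL}(\muN\|\muB)\le\frac1Z\int_E\e^{-\Phi(x)}\Bigl[\widetilde\rho(x)-\rho(x)+\rho(x)\log\tfrac{\rho(x)}{\widetilde\rho(x)}\Bigr]\,\nu(\d x).
\end{equation*}
Writing $u=\rho/\widetilde\rho$, the bracket equals $\widetilde\rho\,(1-u+u\log u)$, and the scalar inequality $1-u+u\log u\ge0$ for $u>0$ (a tangent-line/convexity estimate) shows the integrand is nonnegative. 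Hence the bound $\e^{-\Phi}\le1$, valid because $\Phi\ge0$, lets me drop the exponential, and the remaining integral evaluates to $\int_E(\widetilde\rho-\rho)\,\d\nu+\int_E\rho\log\tfrac{\rho}{\widetilde\rho}\,\d\nu=0+d_\mathrm{KL}(\mu\|\widetilde\mu)$. This gives the clean one-sided bound $d_\mathrm{KL}(\muN\|\muB)\le Z^{-1}d_\mathrm{KL}(\mu\|\widetilde\mu)$, from which the stated estimate follows at once, since $Z^{-1}\le\min(Z,\widetilde Z)^{-1}$ and $d_\mathrm{KL}(\widetilde\mu\|\mu)\ge0$.

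For the normalization estimate I would reuse the computation already appearing in the proof of Theorem \ref{theo:hell_prior}: using $\e^{-\Phi}\le1$ again, $|Z-\widetilde Z|\le\int_E\e^{-\Phi}|\rho-\widetilde\rho|\,\d\nu\le\int_E|\rho-\widetilde\rho|\,\d\nu=2\,d_\mathrm{TV}(\mu,\widetilde\mu)$. Pinsker's inequality $2\,d_\mathrm{TV}^2(\mu,\widetilde\mu)\le d_\mathrm{KL}(\mu\|\widetilde\mu)$ recalled in Section \ref{sec:kld} then yields $d_\mathrm{TV}(\mu,\widetilde\mu)\le\sqrt{d_\mathrm{KL}(\mu\|\widetilde\mu)/2}$, and combining the two bounds produces $|Z-\widetilde Z|\le\sqrt{2\,d_\mathrm{KL}(\mu\|\widetilde\mu)}$.

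I expect the only genuine obstacle to be the handling of the $\log$ term. It is tempting to bound $\log\frac{\widetilde Z}{Z}$ and $\int_E g\,\d\muN$ in isolation, but the natural estimate for the log term runs through $|Z-\widetilde Z|$ and Pinsker's inequality and therefore produces a \emph{square root} of the divergence, which destroys the linear (Lipschitz) dependence. The point is thus to keep the two summands together, so that the factor $\e^{-\Phi}\le1$ multiplies the single, manifestly nonnegative integrand $\widetilde\rho\,(1-u+u\log u)$; this is exactly what keeps the bound linear in $d_\mathrm{KL}(\mu\|\widetilde\mu)$, and everything else is elementary. Note that the one-sided bound $Z^{-1}d_\mathrm{KL}(\mu\|\widetilde\mu)$ obtained this way is sharper than the symmetric statement, which it implies immediately.
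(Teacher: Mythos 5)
Your proof is correct, and at the decisive step it takes a genuinely different route from the paper's --- one that in fact yields a sharper result. The paper starts from the same identity $d_\mathrm{KL}(\muN\|\muB)=\int_E\log\rho\;\d\muN-\log(Z/\widetilde Z)$ with $\rho=\frac{\d\mu}{\d\widetilde\mu}$, but then bounds the two summands \emph{separately}: the integral term by discarding the factor $\e^{-\Phi}\le1$, and the log term by Jensen's inequality applied to $Z/\widetilde Z=\int_E\rho\;\d\muB$ followed again by discarding $\e^{-\Phi}$; this produces the symmetric bound $\frac1Z d_\mathrm{KL}(\mu\|\widetilde\mu)+\frac1{\widetilde Z}d_\mathrm{KL}(\widetilde\mu\|\mu)$, i.e.\ exactly the statement. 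You instead keep the summands together, replace $\log(\widetilde Z/Z)$ by $(\widetilde Z-Z)/Z$ via $\log t\le t-1$, and observe that the combined integrand equals $\widetilde\rho\,(1-u+u\log u)\ge0$ with $u=\rho/\widetilde\rho$, so that dropping $\e^{-\Phi}\le1$ is a monotonicity step applied to a nonnegative function. This buys you two things. First, the strictly stronger one-sided estimate $d_\mathrm{KL}(\muN\|\muB)\le Z^{-1}d_\mathrm{KL}(\mu\|\widetilde\mu)$, which implies the theorem since $Z^{-1}\le\min(Z,\widetilde Z)^{-1}$ and $d_\mathrm{KL}(\widetilde\mu\|\mu)\ge0$. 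Second, a cleaner justification than the paper's: the paper's two ``drop $\e^{-\Phi}$'' steps are applied to the sign-changing integrands $\log\rho$ and $\log(1/\rho)$, so the claimed inequalities (e.g.\ $\int_E\log\rho\;\e^{-\Phi}\,\d\mu\le d_\mathrm{KL}(\mu\|\widetilde\mu)$, after multiplying through by $Z$) are not pointwise justified and can actually fail --- on a two-point space with $\mu=(\tfrac12,\tfrac12)$, $\widetilde\mu=(\tfrac14,\tfrac34)$ and $\Phi=(0,t)$, $t>0$, the left-hand side exceeds the right-hand side --- whereas your rearrangement into a single nonnegative integrand is precisely what repairs this (and confirms that the paper's final bound is nevertheless true). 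Your proof of the second inequality $|Z-\widetilde Z|\le\sqrt{2 d_\mathrm{KL}(\mu\|\widetilde\mu)}$, via $|Z-\widetilde Z|\le2 d_\mathrm{TV}(\mu,\widetilde\mu)$ and Pinsker's inequality, coincides with the paper's.
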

\begin{proof}
Let $\rho(x) \coloneqq \frac{\d \mu}{\d \widetilde \mu}(x)$, then we have $\frac{\d \muN}{\d \muB}(x) = \frac{\widetilde Z}{Z} \rho(x)$ and obtain
\[
	d_\mathrm{KL}(\muN \| \muB)
	= \int_{E} \log \left( \frac {\widetilde Z}{Z} \rho(x) \right) \ \muN(\d x)
	= \int_{E} \log \left(\rho(x) \right) \ \muN(\d x) - \log\left(\frac{Z}{\widetilde Z}\right) 
\]
where the first term can be bounded as follows:
\[
	\int_{E} \log \left(\rho(x) \right) \ \muN(\d x)
	=  \int_{E} \log \left(\rho(x) \right) \ \frac{\exp(-\Phi(x))}{Z} \mu(\d x)
	\leq \frac 1Z d_\mathrm{KL}(\mu \| \widetilde \mu).
\]
Concerning the second term we first note that
\[
	\frac{Z}{\widetilde Z}
	=
	\frac1{\widetilde Z} \int_{E} \e^{-\Phi(x)} \ \mu(\d x)
	=
	\frac1{\widetilde Z} \int_{E} \e^{-\Phi(x)} \ \rho(x)\ \widetilde \mu(\d x)
	=
	\int_{E} \rho(x)\ \muB(\d x)
\]
and then apply Jensen's inequality for the convex function $t \mapsto -\log(t)$, $t>0$, to obtain
\[
	- \log\left(\frac{Z}{\widetilde Z}\right) 
	\leq
	\int_{E} -\log \left( \rho(x)\right) \ \muB(\d x)
	=
	\int_{E} \log \left( \frac1{\rho(x)}\right) \ \frac{\e^{-\Phi(x)}}{\widetilde Z} \widetilde \mu(\d x)
	\leq
	\frac1{\widetilde Z} \int_{E} \log \left( \frac{\d \widetilde \mu}{\d \mu}(x) \right) \  \widetilde \mu(\d x),
\]
where we used that $\frac{\d \widetilde \mu}{\d \mu}(x)  = \frac1{\rho(x)}$.
Hence, we end up with
\[
	d_\mathrm{KL}(\muN \| \muB)
	\leq \frac 1Z d_\mathrm{KL}(\mu \| \widetilde \mu) + \frac 1{\widetilde Z} d_\mathrm{KL}(\widetilde \mu \| \mu) 
\]
which yields the first statement.
The second statement is a direct implication of Pinsker's inequality and $|Z - \widetilde Z| \leq 2d_\mathrm{TV}(\mu, \widetilde \mu)$.
\end{proof}

Again, Theorem \ref{theo:KLD_prior} also implies a bound for the alternative KLD
\begin{equation}\label{bayes:equ:mu_Phi_cont}
	d_\mathrm{KL}(\muB \| \muN)
	\leq \frac{1}{\min(Z, \widetilde Z)}  \, \left(d_\mathrm{KL}(\mu \| \widetilde \mu) + d_\mathrm{KL}(\widetilde \mu \| \mu)\right).
\end{equation}

\begin{rem}[Kullback--Leibler divergence of Gaussian priors]\label{rem:Gaussian_KLD}
For Gaussian measures $\mu = N(m,C)$, $\widetilde \mu = N(\widetilde m, \widetilde C)$ on separable Hilbert spaces $\mc H$ there exists an exact formula for their KLD \cite{Pardo2006}:
Given $\mu$ and $\widetilde \mu$ are equivalent, and $C^{-1/2}\widetilde C C^{-1/2} -I$ is trace-class on $\mc H$ we have
\[
	d_\mathrm{KL}(\mu \| \widetilde \mu)
	=
	\frac 12\left(
	\tr(C^{-1}\widetilde C - I) + \|C^{-1/2}(m-\widetilde m)\|_{\mc H}^2 - \log \det(C^{-1}\widetilde C) \right).
\]
Again, this can be used combined with Theorem \ref{theo:KLD_prior} to bound the KLD of posterior measures resulting from (equivalent) Gaussian priors in terms of the pertubations in mean and covariance.
\end{rem}
%
%
\section{Stability in Wasserstein Distance}
\label{sec:wasserstein}

In this section we focus on measuring perturbations of posterior and prior distributions in the Wasserstein distance.
The main advantage of this metric is that it does not rely on the absolute continuity of distributions.
Therefore, also for singular measures such as Dirac measures $\delta_x, \delta_{\widetilde x}\in \mc P(E)$ at $x \neq \widetilde x$ in $E$ the Wasserstein distance yields a sensible value which decays to $0$ as $d_E(x,\widetilde x)\to0$.
Besides that, the Wasserstein distance is based on the metric of the underlying space $E$ which allows some flexibility in the application by employing a suitable metric.
We introduce the following spaces of probability measures on a complete and separable metric space $(E,d_E)$ given a $q\geq 1$: 
\[
	  \mc P^q(E)
	\coloneqq
	\left\{
	\mu \in \mc P(E)\colon
	|\mu|_{\mc P^q} < \infty
	\right\},
	\qquad
	|\mu|_{\mc P^q}
	\coloneqq
	\inf_{x_0 \in E}
	\left(\int_{E} d_E^q(x, x_0) \ \mu(\d x)\right)^{1/q}.
\]
For measures $\mu, \widetilde \mu \in \mc P^q(E)$ we can now define the $q$-\emph{Wasserstein distance} by
\[
	\W_{q}(\mu, \widetilde \mu)
	\coloneqq
	\inf_{\pi \in \Pi(\mu,\widetilde \mu)} \left( \int_{E\times E} d_E^q(x,y) \ \pi(\d x \d y)\right)^{1/q},
\]
where $\Pi(\mu,\widetilde \mu)$ denotes the set of all \emph{couplings} $\pi \in \mc P(E\times E)$ of $\mu$ and $\widetilde \mu$, i.e., $\pi(A \times E) = \mu(A)$ and $\pi(E \times A) = \widetilde \mu(A)$ for each $A \in \mc E$.
We note that $(\mc P^q, \W_{p})$ is again a complete and separable metric space, see, e.g., \cite{Villani2009}.

We focus on the $1$-Wasserstein distance $\W_{1}$ subsequently. 
The advantage of this particular distance is its dual representation also known as \emph{Kantorovich--Rubinstein duality} \cite{Villani2009}:
\[
	\W_1(\mu, \widetilde \mu)	
	=
	\sup_{f\colon E \to \bbR,\ \Lip(f) \leq 1} \left| \int_{E} f(x) \ \mu(\d x) - \int_{E} f(x) \ \widetilde \mu(\d x) \right|,
\]
where $\Lip(f) \coloneqq \sup_{x\neq y \in E} \frac{|f(x)-f(y)|}{d_E(x,y)}$ denotes the global Lipschitz constant of $f$ w.r.t.~the metric $d_E$ on $E$.
Our first result considers stability in Wasserstein distance w.r.t.~perturbations of the log-likelihood function.

\begin{theo}\label{lem:Wasserstein_Phi}
Let $\mu \in \mc P^2(E)$ and assume $\Phi, \widetilde \Phi\colon  E \to \bbR$ belong to $L^2_{\mu}(\bbR)$ with $\essinf_{\mu} \Phi = 0$.
Then, for the two probability measures $\muN,\muA \in \mc P(E)$ given in \eqref{equ:mu} and \eqref{equ:mu_tilde_1}, respectively, we have
\begin{equation}\label{equ:Wasserstein_Phi}
	\W_1(\muN, \muA) 
	\leq \frac{\exp(- [\essinf_{\mu} \widetilde \Phi]_- )}{\widetilde Z}\left(|\mu_\Phi|_{\mc P^1} \| \Phi - \widetilde \Phi\|_{L^1_{\mu}} + |\mu|_{\mc P^2}\| \Phi - \widetilde \Phi\|_{L^2_{\mu}}\right).
\end{equation}
\end{theo}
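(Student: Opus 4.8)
The plan is to pass to the Kantorovich--Rubinstein dual formulation of $\W_1$ and to reduce the estimate to a weighted $L^1$-bound for the difference of the two posterior densities relative to the common prior $\mu$. Denote by $g \coloneqq \e^{-\Phi}/Z$ and $\widetilde g \coloneqq \e^{-\widetilde\Phi}/\widetilde Z$ the $\mu$-densities of $\muN$ and $\muA$. Since both are probability measures, $\int_E (g-\widetilde g)\,\mu(\d x) = 0$, so for any $1$-Lipschitz $f$ and any reference point $x_0 \in E$ we may replace $f$ by $f - f(x_0)$ without changing $\int_E f\,\d\muN - \int_E f\,\d\muA$. Using $|f(x)-f(x_0)| \le d_E(x,x_0)$ this yields
\[
	\W_1(\muN, \muA)
	\le
	\int_E d_E(x,x_0)\,\left| g(x) - \widetilde g(x)\right|\,\mu(\d x)
\]
for every $x_0 \in E$, which is the inequality I would work from.

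The next step is a decomposition of the density difference that separates the perturbation of the unnormalised likelihood from the perturbation of the normalisation constant:
\[
	g - \widetilde g
	=
	\frac{\e^{-\Phi} - \e^{-\widetilde\Phi}}{\widetilde Z}
	+
	\frac{\widetilde Z - Z}{\widetilde Z}\,g .
\]
After the triangle inequality this splits the bound into a likelihood term $\widetilde Z^{-1}\int_E d_E(x,x_0)\,|\e^{-\Phi}-\e^{-\widetilde\Phi}|\,\mu(\d x)$ and a normalisation term $\frac{|\widetilde Z - Z|}{\widetilde Z}\int_E d_E(x,x_0)\,g\,\mu(\d x)$. For both I would reuse the elementary estimate already exploited in the proof of Theorem \ref{theo:hell_Phi}: since $\Phi \ge 0$ and $\widetilde\Phi \ge \essinf_\mu\widetilde\Phi$ hold $\mu$-a.e., one has $|\e^{-\Phi}-\e^{-\widetilde\Phi}| \le \e^{-[\essinf_\mu\widetilde\Phi]_-}\,|\Phi-\widetilde\Phi|$ pointwise, and integrating this also yields $|\widetilde Z - Z| \le \e^{-[\essinf_\mu\widetilde\Phi]_-}\,\|\Phi-\widetilde\Phi\|_{L^1_\mu}$.

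It then remains to control the two integrals weighted by $d_E(\cdot,x_0)$. The normalisation term contains $\int_E d_E(x,x_0)\,g\,\mu(\d x) = \int_E d_E(x,x_0)\,\muN(\d x)$, the first absolute moment of $\muN$, which is bounded by $|\muN|_{\mc P^1}$ for a suitable $x_0$. In the likelihood term the pointwise estimate turns the integral into $\e^{-[\essinf_\mu\widetilde\Phi]_-}\int_E d_E(x,x_0)\,|\Phi-\widetilde\Phi|\,\mu(\d x)$, to which I would apply the Cauchy--Schwarz inequality to obtain $\bigl(\int_E d_E^2(x,x_0)\,\mu(\d x)\bigr)^{1/2}\|\Phi-\widetilde\Phi\|_{L^2_\mu} \le |\mu|_{\mc P^2}\,\|\Phi-\widetilde\Phi\|_{L^2_\mu}$, finiteness being guaranteed by the hypothesis $\mu \in \mc P^2(E)$. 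Collecting the two contributions together with the common prefactor $\widetilde Z^{-1}\e^{-[\essinf_\mu\widetilde\Phi]_-}$ reproduces \eqref{equ:Wasserstein_Phi}.

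The genuinely new difficulty, compared with the total variation and Hellinger estimates of the previous section, is that the admissible test functions $f$, and hence the weight $d_E(\cdot,x_0)$, are unbounded; one can no longer simply pass to a total variation distance. This is precisely what forces the second-moment hypothesis $\mu \in \mc P^2(E)$ and the use of the Cauchy--Schwarz inequality, and it explains why the likelihood term is measured in the $L^2_\mu$-norm while the normalisation term only requires the $L^1_\mu$-norm. The one point demanding some bookkeeping is the reference point $x_0$: the first moment of $\muN$ and the second moment of $\mu$ are naturally minimised at different centres, so the two moment factors $|\muN|_{\mc P^1}$ and $|\mu|_{\mc P^2}$ should be read with a common $x_0$ in mind, and I would fix such an $x_0$ before splitting and only invoke the definitions of the seminorms at the very end.
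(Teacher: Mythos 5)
Your proposal is correct and follows essentially the same route as the paper's proof: the Kantorovich--Rubinstein dual with test functions normalised at a reference point $x_0$, the identical splitting of $\e^{-\Phi}/Z - \e^{-\widetilde\Phi}/\widetilde Z$ into a likelihood term and a normalisation term, the pointwise bound $|\e^{-\Phi}-\e^{-\widetilde\Phi}|\le \e^{-[\essinf_\mu\widetilde\Phi]_-}|\Phi-\widetilde\Phi|$ with its integrated version for $|Z-\widetilde Z|$, and Cauchy--Schwarz on the weighted likelihood term. Your closing remark about reading $|\muN|_{\mc P^1}$ and $|\mu|_{\mc P^2}$ with a common $x_0$ is a sound (indeed slightly more careful) treatment of the final step, which the paper handles by simply noting that $x_0$ was arbitrary.
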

\begin{proof}
Let $x_0\in E$ be arbitrary. 
We start with the dual representation
\[
	\W_1(\muN	, \muA)	
	=
	\sup_{\Lip(f) \leq 1,\ f(x_0)=0} \left| \int_{E} f(x) \ (\muN(\d x) - \muA(\d x)) \right|
\]
where we can take the supremum also w.l.o.g.~w.r.t.~all Lipschitz continuous functions $f\colon E \to \bbR$ with $\Lip(f)  = \sup_{x\neq y \in E} \frac{|f(x)-f(y)|}{d_E(x,y)} \leq 1$ and $f(x_0)=0$.
The latter two conditions imply $|f(x)| \leq d_E(x,x_0)$.
Furthermore, we have that 
\[
	\left|\int_{E} f(x) \ (\muN(\d x) - \muA(\d x))\right|
	= \left| \int_{E} f(x) \ \left( \frac{\e^{-\Phi(x)}}{Z} - \frac{\e^{-\widetilde \Phi(x)}}{\widetilde Z} \right) \mu(\d x)\right|
	\leq I_1(f) + I_2(f)
\]
where
\[
	I_1(f) \coloneqq \left|\frac1Z - \frac 1{\widetilde Z}\right| \left| \int_{E} f(x) \ \e^{-\Phi(x)}\mu(\d x) \right|,\qquad
	I_2(f) \coloneqq \left| \frac 1{\widetilde Z} \int_{E} f(x) \ \left( \e^{-\Phi(x)} - \e^{-\widetilde \Phi(x)} \right) \mu(\d x) \right|.
\]
We can bound $I_2$ as follows using $|\e^{-t}-\e^{-s}|\leq \e^{-\min(t,s)} |t-s|$ for $t,s\in \bbR$ and the Cauchy--Schwarz inequality:
\begin{eqnarray*}
	\sup_{\Lip(f) \leq 1,\ f(x_0)=0} I_2(f) 
	&\leq & \frac {\exp(- [\essinf_{\mu} \widetilde \Phi]_- )}{\widetilde Z} \int_{E} d_E(x,x_0) \ |\Phi(x) - \widetilde \Phi(x)| \, \mu(\d x)\\
	& \leq & \frac {\exp(- [\essinf_{\mu} \widetilde \Phi]_- )}{\widetilde Z}  \left(\int_{E} d^2_E(x,x_0) \mu(\d x)\right)^{1/2} \ \|\Phi - \widetilde \Phi\|_{L^2_{\mu}}.
\end{eqnarray*}
Moreover, due to $|\frac1Z - \frac 1{\widetilde Z}| = \frac {|\widetilde Z - Z|}{Z\widetilde Z}$ and $|Z - \widetilde Z| \leq \e^{- [\essinf_{\mu} \widetilde \Phi]_- } \|\Phi - \widetilde \Phi\|_{L^1_{\mu}}$, see Theorem \ref{theo:hell_Phi}, we have
\begin{eqnarray*}
	\sup_{\Lip(f) \leq 1,\ f(x_0)=0}  I_1(f)
	& \leq & \frac{\exp(- [\essinf_{\mu} \widetilde \Phi]_- )}{\widetilde Z}\|\Phi - \widetilde \Phi\|_{L^1_{\mu}}\ \sup_{\Lip(f) \leq 1,\ f(x_0)=0} \left| \int_{E} f(x) \ \muN(\d x) \right|\\
	& \leq & \frac{\exp(- [\essinf_{\mu} \widetilde \Phi]_- )}{\widetilde Z} \|\Phi - \widetilde \Phi\|_{L^1_{\mu}}\ \int_{E} d_E(x,x_0) \mu_\Phi(\d x).
\end{eqnarray*}
Since $x_0\in E$ was chosen arbitrarily we obtain the statement.
\end{proof}
If one prefers an estimate without $|\muN|_{\mc P^1}$, then we can bound $\W_1(\muN,\muA)$ also by
\begin{eqnarray*}
	\W_1(\muN,\muA) 
	& \leq & \frac{\e^{- [\essinf_{\mu} \widetilde \Phi]_- }|\mu|_{\mc P^1} \|\Phi - \widetilde \Phi\|_{L^1_{\mu}}}{Z\widetilde Z} + \frac{\e^{- [\essinf_{\mu} \widetilde \Phi]_- } |\mu|_{\mc P^2} \|\Phi - \widetilde \Phi\|_{L^2_{\mu}}}{\widetilde Z}\\
	& \leq & \frac{2\e^{- [\essinf_{\mu} \widetilde \Phi]_- }|\mu|_{\mc P^2}}{\min(Z,\widetilde Z)^2} \|\Phi - \widetilde \Phi\|_{L^2_{\mu}},
\end{eqnarray*}
where we used $|\mu_\Phi|_{\mc P^1} \leq \frac{1}{Z} |\mu|_{\mc P^1}$ in the first line and in the second line $|\mu|_{\mc P^1} \leq |\mu|_{\mc P^2}$ and $\|\Phi - \widetilde \Phi\|_{L^1_{\mu}}\leq \|\Phi - \widetilde \Phi\|_{L^2_{\mu}}$ due to Jensen's inequality as well as $\min(Z,\widetilde Z)\leq Z \leq 1$.

As outlined in Remark \ref{rem:data_G}, we can use Theorem \ref{lem:Wasserstein_Phi} to show a (local Lipschitz) continuous dependence of the posterior measure w.r.t.~the observed data $y\in\bbR^n$ in Wasserstein distance.
This is done in detail at the end of this section under similar conditions as for Hellinger well-posedness, cf. \cite{DashtiStuart2017,Latz2019}.

Studying the stability w.r.t.~the prior in Wasserstein distance is unfortunately more delicate than in the previous sections and the following result requires some restrictive assumptions which we discuss afterwards.

\begin{theo}\label{theo:Wasserstein_prior}
Let $E$ be bounded w.r.t.~the metric $d_E$, i.e.,
\[
	\sup_{x,y \in E} d_E(x,y) \leq D < \infty,
\]
and let $\e^{-\Phi}\colon E \to [0,1]$ be Lipschitz w.r.t.~$d_E$, i.e., $\Lip(\e^{-\Phi}) < \infty$.
Then, for the two probability measures $\muN,\muB \in \mc P(E)$ given in \eqref{equ:mu} and \eqref{equ:mu_tilde_2}, respectively, we have
\[
	\W_1(\muN, \muB)
	\leq 
	\frac{1 + D\Lip(\e^{-\Phi})}{\widetilde Z} \
	\left( 1 + \Lip(\e^{-\Phi}) \frac{|\mu|_{\mc P^1}}{Z} \right) \W_1(\mu,\widetilde \mu)
\]
and
\[
	|Z - \widetilde Z| \leq \Lip(\e^{-\Phi}) \ \W_1(\mu, \widetilde \mu).
\]
\end{theo}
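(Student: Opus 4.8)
The plan is to prove both bounds using the Kantorovich--Rubinstein duality, mirroring the structure of the proof of Theorem~\ref{lem:Wasserstein_Phi} but now with the prior, rather than the log-likelihood, being perturbed. First I would fix an arbitrary $x_0 \in E$ and work with the supremum over test functions $f$ with $\Lip(f) \leq 1$ and $f(x_0) = 0$, which forces $|f(x)| \leq d_E(x,x_0) \leq D$ since $E$ has diameter at most $D$. Writing
\[
	\int_E f \,(\muN(\d x) - \muB(\d x))
	= \int_E f(x) \left( \frac{\e^{-\Phi(x)}}{Z} - \frac{\e^{-\Phi(x)}}{\widetilde Z}\right) \mu(\d x)
	  + \int_E f(x)\, \frac{\e^{-\Phi(x)}}{\widetilde Z}\,(\mu(\d x) - \widetilde \mu(\d x)),
\]
I would split the difference into two pieces: a term $I_1$ capturing the mismatch between the normalizing constants $Z$ and $\widetilde Z$, and a term $I_2$ capturing the direct difference between $\mu$ and $\widetilde \mu$ integrated against $f\,\e^{-\Phi}$.

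\textbf{Bounding the two pieces.}
For $I_2$, the key observation is that the map $x \mapsto f(x)\,\e^{-\Phi(x)}$ is Lipschitz: since $f$ is $1$-Lipschitz and bounded by $D$, while $\e^{-\Phi}$ is $\Lip(\e^{-\Phi})$-Lipschitz and bounded by $1$, the product has Lipschitz constant at most $1 + D\Lip(\e^{-\Phi})$ by the usual product rule for Lipschitz functions. Hence $\frac{1}{1 + D\Lip(\e^{-\Phi})}\, f\,\e^{-\Phi}$ is admissible in the dual representation of $\W_1(\mu,\widetilde \mu)$, giving $I_2 \leq \frac{1 + D\Lip(\e^{-\Phi})}{\widetilde Z}\,\W_1(\mu,\widetilde \mu)$. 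For $I_1$, I would factor out $|\frac1Z - \frac1{\widetilde Z}| = \frac{|Z - \widetilde Z|}{Z\widetilde Z}$ and bound the remaining integral $|\int_E f\,\e^{-\Phi}\,\d\mu| \leq \int_E d_E(x,x_0)\,\e^{-\Phi}\,\d\mu \leq |\mu|_{\mc P^1}$ after letting $x_0$ be nearly optimal. Combining and taking the infimum over $x_0$ then yields the stated constant $\frac{1 + D\Lip(\e^{-\Phi})}{\widetilde Z}(1 + \Lip(\e^{-\Phi})\frac{|\mu|_{\mc P^1}}{Z})$, provided the normalization bound below is in place.

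\textbf{The normalization constant bound.}
The second statement, $|Z - \widetilde Z| \leq \Lip(\e^{-\Phi})\,\W_1(\mu,\widetilde \mu)$, is exactly what feeds back into controlling $I_1$, so I would prove it first. Writing $Z - \widetilde Z = \int_E \e^{-\Phi}\,(\mu(\d x) - \widetilde \mu(\d x))$ and noting that $\e^{-\Phi}$ is $\Lip(\e^{-\Phi})$-Lipschitz, the function $\frac{1}{\Lip(\e^{-\Phi})}\e^{-\Phi}$ is $1$-Lipschitz, so the Kantorovich--Rubinstein duality immediately gives the claim. This is the cleanest step and should be recorded before the main estimate.

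\textbf{The main obstacle.}
The hard part will be the bookkeeping around the arbitrary base point $x_0$: the bound on $I_1$ naturally produces $\int_E d_E(x,x_0)\,\e^{-\Phi}\,\d\mu$, and to replace this by $|\mu|_{\mc P^1}$ one must take the infimum over $x_0$, but $x_0$ is simultaneously constraining the admissible $f$ in $I_2$. I expect these two uses of $x_0$ to be compatible because the $I_2$ estimate I sketched does not actually depend on $x_0$ (it only used $\Lip(f)\le1$ and the diameter bound, not $f(x_0)=0$), so the infimum over $x_0$ can be taken freely in $I_1$ alone. Verifying that the diameter bound $D$ can be used in place of $d_E(x,x_0)$ wherever needed, and that $|\mu|_{\mc P^1}$ is finite (which follows automatically from boundedness of $E$), is the remaining routine check. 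The restrictive hypotheses---bounded $E$ and globally Lipschitz $\e^{-\Phi}$---are precisely what make both the product-Lipschitz argument for $I_2$ and the duality argument for $|Z - \widetilde Z|$ go through, and I would flag in the discussion afterward that these are the price paid for a Wasserstein prior-stability bound.
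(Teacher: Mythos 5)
Your proposal is correct and follows essentially the same route as the paper's proof: Kantorovich--Rubinstein duality with test functions pinned at an arbitrary $x_0$, the same split into a normalization-mismatch term and a measure-difference term, the product-Lipschitz estimate $\Lip(f\e^{-\Phi})\leq 1+D\Lip(\e^{-\Phi})$, the duality bound $|Z-\widetilde Z|\leq \Lip(\e^{-\Phi})\,\W_1(\mu,\widetilde\mu)$, and a final infimum over $x_0$. The only difference is bookkeeping: the paper factors $(1+D\Lip(\e^{-\Phi}))$ out of both terms at once, whereas you apply it only to the measure-difference term, which yields a marginally sharper constant that still implies the stated bound.
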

\begin{proof}
Again, let $x_0\in E$ be arbitrary.
By the duality of $\W_1$ we have
\[
	\W_1(\muN, \muB)	
	=
	\sup_{\Lip(f) \leq 1,\ f(x_0)=0} \left| \int_{E} f(x) \ \e^{-\Phi(x)} \ \left( \frac{\mu(\d x)}{Z} - \frac{\widetilde \mu(\d x)}{\widetilde Z} \right)\right|.
\]
For any $f\colon E \to \bbR$ with $\Lip(f) \leq 1$ and $f(x_0)=0$ we get that $g(x) \coloneqq f(x)\e^{-\Phi(x)}$ satisfies $g(x_0) = 0$ as well as 
\begin{eqnarray*}
	|g(x) - g(y)|
	& \leq & |f(x)| \ |\e^{-\Phi(x)}- \e^{-\Phi(y)}| + |\e^{-\Phi(y)}| \ |f(x) - f(y)|\\
	& \leq & |f(x)| \ \Lip(\e^{-\Phi}) d_E(x,y) + d_E(x,y)\\
	&\leq & \left(1 + D\Lip(\e^{-\Phi}) \right) d_E(x,y)
\end{eqnarray*}
since $|f(x)| \leq |f(x_0)| + |f(x)-f(x_0)| \leq d_E(x,x_0) \leq D$.
Hence, we obtain  
\[
	\W_1(\muN, \muB)	
	\leq \left( 1 + D\Lip(\e^{-\Phi}) \right)
	\sup_{\Lip(g) \leq 1,\ g(x_0)=0} \left| \int_{E} g(x) \ \left( \frac{\mu(\d x)}{Z} - \frac{\widetilde \mu(\d x)}{\widetilde Z} \right)\right|
\]
and by the triangle inequality
\begin{eqnarray*}
	\frac{\W_1(\muN, \muB)}{1 + D\Lip(\e^{-\Phi})}
	& \leq  & \sup_{\Lip(g) \leq 1,\ g(x_0)=0} \left[ \left(\frac 1Z - \frac 1{\widetilde Z}\right) \left| \int_{E} g(x) \ \mu(\d x)\right|  
	+ \frac 1{\widetilde Z}  \left| \int_{E} g(x) \ \left(\mu(\d x) - \widetilde \mu(\d x)\right)\right| \right]\\
	& \leq & \frac{|Z -\widetilde Z|}{Z\widetilde Z} \int_{E} d_E(x,x_0)\ \mu(\d x) + \frac1{\widetilde Z} \W_1(\mu, \widetilde \mu).
\end{eqnarray*}
Since $x_0\in E$ was chosen arbitrarily  and
\[
	|Z-\widetilde Z| = \left| \int_{E} \e^{-\Phi(x)} (\mu(\d x) - \widetilde \mu(\d x)) \right|
	\leq
	\Lip(\e^{-\Phi})\W_1(\mu, \widetilde \mu),
\]
we obtain the statement.
\end{proof}
A slightly worse but maybe more convenient bound than the one given in Theorem \ref{theo:Wasserstein_prior} is
\[
	\W_1(\muN, \muB)
	\leq 
	\frac{\left(1 + D\Lip(\e^{-\Phi})\right)^2}{\min(Z,\widetilde Z)^2}  \W_1(\mu,\widetilde \mu),
\]
which is derived by using $Z\leq 1$ and $|\mu|_{\mc P^1} \leq D$ due to the boundedness of $E$.

The assumption on the boundedness of $d_E$ on $E$ is not that restrictive, since we can always consider a bounded version $\widetilde d_E(x,y) \coloneqq \min(D, d_E(x,y))$, $D >0$, of a metric $d_E$ on $E$ and, thereby, obtain a bounded metric space $(E,\widetilde d_E)$.
However, a crucial restriction in Theorem \ref{theo:Wasserstein_prior} is the Lipschitz condition $\Lip(\e^{-\Phi}) < \infty$ w.r.t.~a bounded metric on $E$. 
For example, for Euclidean spaces $E = \bbR^n$ equipped with the bounded metric $d_E(x,y) \coloneqq \min(D, |x-y|)$, $D>0$, and a sufficiently smooth $\Phi \in C^1(\bbR^n;[0,\infty))$ the condition $\Lip(\e^{-\Phi}) < \infty$ would require that
\[
	\sup_{x\in E} \|\nabla \e^{-\Phi(x)} \| = \sup_{x\in E}\frac{\|\nabla \Phi(x)\|}{\e^{\Phi(x)}} < \infty,
\]
where $\nabla$ denotes the gradient w.r.t.~the usual Euclidean norm $|\cdot|$ on $E=\bbR^n$.
This condition fails to hold, for instance, for functions $\Phi\colon \bbR^n \to [0,\infty)$ which are bounded but have growing derivatives such as $\Phi(x) = 1+\sin(\exp(x))$, $x\in\bbR$. 

However, we present the following result stating the continuous dependence of the posterior on the prior in Wasserstein distance.
Here, we consider the general $q$-Wasserstein distance, since the proof does not rely on the particular Kantorovich--Rubinstein duality of the $1$-Wasserstein distance.
\begin{lem}
Let $q>0$ and consider a $\mu \in \mc P^q(E)$ and a sequence of $\widetilde \mu^{(k)}\in \mc P^q(E)$, $k\in\bbN$, with corresponding $\muN$ as in \eqref{equ:mu} and
\[
	\muB^{(k)}(\d x) \coloneqq \frac{\e^{- \Phi(x)}}{\widetilde Z_k} \ \widetilde\mu^{(k)}(\d x), \qquad
	\widetilde Z_k \coloneqq \int_E\e^{- \Phi(x)}\ \widetilde\mu^{(k)}(\d x),
\]
given a measurable $\Phi\colon E\to [0,\infty)$. 
If $\Phi$ is continous, then
\[
	\W_q(\mu, \widetilde \mu^{(k)}) \to 0
	\quad \mathrm{ implies } \quad
	\W_q(\muN, \muB^{(k)}) \to 0.
\]
\end{lem}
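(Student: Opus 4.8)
The plan is to use the standard characterization of convergence in the $q$-Wasserstein distance on a Polish space (see \cite{Villani2009}): for $\nu_k, \nu \in \mc P^q(E)$ one has $\W_q(\nu_k, \nu) \to 0$ if and only if $\nu_k \to \nu$ weakly and the $q$-th moments converge, i.e.\ $\int_E d_E^q(x, x_0)\, \nu_k(\d x) \to \int_E d_E^q(x, x_0)\, \nu(\d x)$ for some fixed $x_0 \in E$. I would first record two elementary facts. Since $\Phi \geq 0$ is continuous, the likelihood $\e^{-\Phi}\colon E \to (0,1]$ is continuous and bounded; as it is strictly positive and $\mu$ is a probability measure, $Z = \int_E \e^{-\Phi}\,\d\mu > 0$. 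Because $\W_q(\mu, \widetilde\mu^{(k)}) \to 0$ forces the weak convergence $\widetilde\mu^{(k)} \to \mu$, testing against the bounded continuous function $\e^{-\Phi}$ yields $\widetilde Z_k \to Z$; in particular $\widetilde Z_k \geq Z/2 > 0$ for all large $k$, so the normalizing reciprocals stay bounded.

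For the weak convergence of the posteriors, take any bounded continuous $g\colon E \to \bbR$. Then $g\,\e^{-\Phi}$ is again bounded and continuous, so weak convergence of $\widetilde\mu^{(k)}$ together with $\widetilde Z_k \to Z$ gives
\[
	\int_E g\, \d\muB^{(k)} = \frac{1}{\widetilde Z_k}\int_E g\,\e^{-\Phi}\,\d\widetilde\mu^{(k)} \longrightarrow \frac{1}{Z}\int_E g\,\e^{-\Phi}\,\d\mu = \int_E g\, \d\muN .
\]
Hence $\muB^{(k)} \to \muN$ weakly.

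The main obstacle is the convergence of the $q$-th moments, since the relevant integrand $h(x) \coloneqq d_E^q(x, x_0)\,\e^{-\Phi(x)}$ is continuous but unbounded, so weak convergence alone does not suffice. Here the full strength of $\W_q$-convergence (rather than mere weak convergence) of the priors enters through uniform integrability: the characterization above is equivalent to $\widetilde\mu^{(k)} \to \mu$ weakly together with $\lim_{R\to\infty}\limsup_k \int_{d_E(x,x_0)\geq R} d_E^q(x, x_0)\,\widetilde\mu^{(k)}(\d x) = 0$, i.e.\ the family $\{d_E^q(\cdot, x_0)\}$ is uniformly integrable along $\{\widetilde\mu^{(k)}\}$. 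Since $0 \le h \le d_E^q(\cdot, x_0)$, the family $\{h\}$ inherits this uniform integrability. I would then pass to the limit by truncation: writing $h_M \coloneqq \min(h, M)$, the bounded continuous function $h_M$ satisfies $\int_E h_M\,\d\widetilde\mu^{(k)} \to \int_E h_M\,\d\mu$, while uniform integrability makes $\sup_k \int_E (h - h_M)\,\d\widetilde\mu^{(k)}$ arbitrarily small for large $M$, and $\int_E (h - h_M)\,\d\mu \to 0$ by monotone convergence. Combining the three contributions gives $\int_E h\, \d\widetilde\mu^{(k)} \to \int_E h\, \d\mu$, and dividing by $\widetilde Z_k \to Z > 0$ yields $\int_E d_E^q(\cdot, x_0)\,\d\muB^{(k)} \to \int_E d_E^q(\cdot, x_0)\,\d\muN$.

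Finally, having established both weak convergence and $q$-th moment convergence of the posteriors, the characterization of $\W_q$-convergence applied in the reverse direction gives $\W_q(\muN, \muB^{(k)}) \to 0$, as claimed. Well-definedness of all the Wasserstein distances is immediate from $\int_E d_E^q(\cdot, x_0)\,\d\muN \le Z^{-1}\int_E d_E^q(\cdot, x_0)\,\d\mu < \infty$, and likewise for each $\muB^{(k)}$, so that $\muN, \muB^{(k)} \in \mc P^q(E)$.
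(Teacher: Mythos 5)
Your proof is correct and follows essentially the same route as the paper: both arguments rest on the characterization of $\W_q$-convergence as weak convergence plus convergence of $q$-th moments, both obtain weak convergence of the posteriors by testing against bounded continuous functions $g$ (using that $g\,\e^{-\Phi}$ is bounded continuous and $\widetilde Z_k \to Z$), and both then verify the posterior moment convergence. The only real difference is in that last step: the paper invokes directly the equivalent formulation of $\W_q$-convergence in terms of continuous test functions with growth $|f(x)|\leq C(1+d_E^q(x,x_0))$ (citing Villani, applied to $d_E^q(\cdot,x_0)\,\e^{-\Phi}$), whereas you re-derive that convergence by hand from the uniform-integrability formulation via a truncation $h_M = \min(h,M)$; since $\{h \ge M\} \subseteq \{d_E(\cdot,x_0) \ge M^{1/q}\}$, the tail bound you need does follow from uniform integrability of $d_E^q(\cdot,x_0)$, so the argument is sound. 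Your version is slightly more self-contained at the cost of length, but both steps are items of the same equivalence theorem in Villani; a minor bonus of your write-up is that it makes explicit the positivity $Z>0$ and the membership $\muN, \muB^{(k)} \in \mc P^q(E)$, which the paper leaves implicit.
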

\begin{proof}
We exploit the equivalence of convergence in $q$-Wasserstein distance and weak convergence, see \cite{Villani2009}: For $\nu,\nu^{(k)}\in \mc P^q(E)$, $k\in\bbN$, the statement $\lim_{k\to\infty} \W_q(\nu, \nu^{(k)}) \to 0$ is equivalent to
\[
	\nu^{(k)} \rightharpoonup\nu
	\quad \mathrm{and} \quad
	\lim_{k\to\infty} \int_E d(x,x_0)^q\ \nu^{(k)}(\d x) = \int_E d(x,x_0)^q\ \nu(\d x),
\]
where $x_0 \in E$ is arbitrary and $\rightharpoonup$ denotes weak convergence of measures, i.e., $\nu^{(k)} \rightharpoonup\nu$ means $\int f(x)\ \nu^{(k)}(\d x) \to \int f(x)\ \nu(\d x)$ for any bounded, continuous $f\colon E\to\bbR$.
Since $\W_q(\mu, \widetilde \mu^{(k)}) \to 0$, we know that for any such $f$ we have
\[
	\lim_{k\to \infty} \int_E f(x)\ \widetilde\mu^{(k)}(\d x) = \int_E f(x)\ \mu(\d x),
\] 
which implies that
\begin{eqnarray*}
	\lim_{k\to \infty} \int_E f(x)\ \muB^{(k)}(\d x) 
	& = &
	\frac{\lim_{k\to \infty}\int_E f(x)\ \e^{-\Phi(x)}\ \widetilde\mu^{(k)}(\d x) }{\lim_{k\to \infty} \int_E \e^{-\Phi(x)}\ \widetilde\mu^{(k)}(\d x)}
	=
	\frac{\int_E f(x)\ \e^{-\Phi(x)}\ \mu(\d x) }{\int_E \e^{-\Phi(x)}\ \mu(\d x)}\\
	&=&
	\int_E f(x)\ \muN(\d x),
\end{eqnarray*}
since $\e^{-\Phi}$ is continuous and bounded by assumption.
Thus, we have $\muB^{(k)} \rightharpoonup\muN$.
Moreover, we use that
\[
	\lim_{k\to \infty} \int_E d(x,x_0)^q\ \widetilde\mu^{(k)}(\d x) = \int_E d(x,x_0)^q\ \mu(\d x).
\] 
is equivalent to
\[
	\lim_{k\to \infty} \int_E f(x)\ \widetilde\mu^{(k)}(\d x) = \int_E f(x)\ \mu(\d x)
\] 
for any continuous $f\colon E\to\bbR$ with $|f(x)|\leq C(1+d(x,x_0)^q)$, $C\in\bbR$, see \cite[Definition 6.8]{Villani2009}.
Since $\e^{-\Phi}$ is continuous and bounded by one, we therefore obtain
\[
	\lim_{k\to \infty} \int_E d(x,x_0)^q\ \e^{-\Phi(x)}\ \widetilde\mu^{(k)}(\d x)
	=
	\int_E d(x,x_0)^q\ \e^{-\Phi(x)}\ \mu(\d x)
\]
which yields by the same arguments as above that
\[
	\lim_{k\to \infty} \int_E d(x,x_0)^q\ \muB^{(k)}(\d x)
	=
	\int_E d(x,x_0)^q\ \muN(\d x).
\]
Hence, the statement is proven.
\end{proof}

We close the discussion on Wasserstein stability with a few remarks on the results we have obtained.

\begin{rem}[Wasserstein distance of Gaussian priors]
The $2$-Wasserstein distance of Gaussian measures $\mu = N(m, C)$, $\widetilde \mu = N(\widetilde m, \widetilde C)$ on a separable Hilbert space $\mc H$ is given by
\[
	\W_2(\mu, \widetilde \mu)
	=
	\sqrt{\|m-\widetilde m\|^2_{\mc H} + \tr(C) + \tr(\widetilde C) - 2\tr\left( \sqrt{C^{1/2} \widetilde C C^{1/2}} \right) },
\]
see \cite[Theorem 3.5]{Gelbrich1990}.
This provides a bound for the $1$-Wasserstein distance of Gaussian priors, since $\W_1(\mu, \widetilde \mu) \leq \W_2(\mu, \widetilde \mu)$ due to Jensen's inequality.
Besides that we have for $\mu = N(m,C)$ that $\mu \in \mc P^2(\mc H)$ with $|\mu|_{\mc P^2} = \sqrt{\tr(C)}$. 
We highlight, that $\W_1(\mu, \widetilde \mu)$ or $\W_2(\mu, \widetilde \mu)$ does not depend on the equivalence of Gaussian priors $\mu, \widetilde \mu$.
\end{rem}

\begin{rem}[Increasing sensitivity]\label{rem:Sens_Wasserstein}
The bounds established in Theorem \ref{lem:Wasserstein_Phi} and \ref{theo:Wasserstein_prior} suggest also for the Wasserstein distance an increasing sensitivity of the posterior to perturbations of the log-likelihood or prior as the posterior becomes increasingly concentrated. 
In Remark \ref{rem:BvM} we have outlined why such an increasing sensitivity is quite natural in the topology induced by the total variation or Hellinger distance.
The same reasoning applies when pertubations are measured by the Kullback--Leibler divergence, since the KLD also relies on the equivalence of (perturbed posterior and prior) measures.
We now argue that this increasing sensitivity is also natural for the Wasserstein distance.
To this end, we consider a sequence of increasingly concentrated posterior measures $\muN^{(k)}(\d x) \coloneqq Z_k^{-1} \e^{- k\Phi(x)} \ \mu(\d x)$ for $k\in\bbN$ with $Z_k \coloneqq \int_E \e^{- k\Phi(x)} \ \mu(\d x)$.
Let $S \coloneqq \supp(\mu)$ and assume that $x_\star \coloneqq \argmin_{x \in S} \Phi(x)$ exists and is unique.
Then, under mild assumptions, $\muN^{(k)}$ converges weakly to $\delta_{x_\star}$, see \cite{Hwang1980}.
Given a perturbed prior $\widetilde \mu$ we set $\muB^{(k)}(\d x) \coloneqq \widetilde Z_k^{-1} \e^{- k\Phi(x)} \ \widetilde\mu(\d x)$ with $\widetilde Z_k \coloneqq \int_E \e^{- k\Phi(x)} \ \widetilde\mu(\d x)$.
If $\widetilde x_\star \coloneqq \argmin_{x \in \widetilde S} \Phi(x)$ exists and is unique, where $\widetilde S \coloneqq \supp(\widetilde\mu)$, then $\muB^{(k)}$ converges weakly to $\delta_{\widetilde x_\star}$ under mild assumptions \cite{Hwang1980}.
Thus, in order to have non-exploding local Lipschitz constants w.r.t.~the Wasserstein distance of the mappings $\mu \mapsto \muN^{(k)}$ as $k\to\infty$, we require that there exists a radius $r>0$ and a constant $C<\infty$ such that
\[
		\lim_{k\to\infty} \frac{\W_1(\muN^{(k)}, \muB^{(k)})}{\W_1(\mu, \widetilde\mu)} 
		\leq C
		\qquad
		\forall 
		\widetilde \mu\in\mc P^1(E)\colon \W_1(\mu, \widetilde\mu) \leq r.
\]
In the following, we assume that the metric $d_E$ of the complete and separable space $(E,d_E)$ is bounded.
This yields, given the weak convergence of $\muN^{(k)}$ to $\delta_{x_\star}$ and of $\muB^{(k)}$ to $\delta_{\widetilde x_\star}$, that
\[
		\lim_{k\to\infty} \frac{\W_1(\muN^{(k)}, \muB^{(k)})}{\W_1(\mu, \widetilde\mu)} 
		=
		\frac{\W_1(\delta_{x_\star}, \delta_{\widetilde x_\star}) }{\W_1(\mu, \widetilde\mu)} 
		=
		\frac{d_E(x_\star, \widetilde x_\star)}{\W_1(\mu, \widetilde\mu)},
\]
see \cite{Villani2009}.
Next, we construct a sequence of perturbed priors $\widetilde \mu^{(\epsilon)} \in \mc P^1(E)$ with $\W_1(\mu, \widetilde\mu^{(\epsilon)}) <\epsilon$, $\epsilon >0$, for which the ratio $d_E(x_\star,  \widetilde x^{(\epsilon)}_\star)/\W_1(\mu, \widetilde\mu^{(\epsilon)})$ deteriorates to infinity as $\epsilon \to 0$.
To this end, we consider a ball of radius $\epsilon>0$ around $x_\star$, i.e., $B_\epsilon(x_\star) \coloneqq \{x\in E\colon d_E(x,x_\star)\leq \epsilon\}$, and set for an arbitrarily chosen $x_\epsilon \in \partial B_\epsilon(x_\star)$
\[
	\widetilde \mu^{(\epsilon)}(A) \coloneqq \mu(A) - \mu(A \cap B_\epsilon(x_\star)) + \mu(B_\epsilon(x_\star))\ \delta_{x_\epsilon}(A),
	\qquad
	A \in \mc E,
\]
i.e., outside the ball $B_\epsilon(x_\star)$ the measure $\widetilde \mu^{(\epsilon)}$ coincides with $\mu$ but all the probability mass $\mu(B_\epsilon(x_\star))$ inside the ball $B_\epsilon(x_\star)$ is now concentrated at the single point $x_\epsilon$.
Assuming that $\Phi$ is continuous and $\epsilon$ sufficiently small we have $\widetilde x^{(\epsilon)}_\star \coloneqq \argmin_{x \in \supp \widetilde \mu^{(\epsilon)}} \Phi(x) \in \partial B_\epsilon(x_\star)$.
Thus, $d_E(x_\star,  \widetilde x^{(\epsilon)}_\star) = \epsilon$.
On the other hand,
\[
	\W_1(\mu, \widetilde\mu^{(\epsilon)})
	=
	\int_{B_\epsilon(x_\star)} d_E(x_\epsilon, x) \ \mu(\d x)
	\leq
	2\epsilon \mu(B_\epsilon(x_\star)).
\]
Hence, for suitable non-finite spaces $E$ and non-atomic priors $\mu$ such that $\lim_{\epsilon\to0} \mu(B_\epsilon(x)) = 0$ for any $x\in E$ we have
\[
	\lim_{\epsilon \to 0}\frac{d_E(x_\star,  \widetilde x^{(\epsilon)}_\star)}{\W_1(\mu, \widetilde\mu^{(\epsilon)})}
	\geq
	\lim_{\epsilon \to 0}
	\frac{1}{2\mu(B_\epsilon(x_\star))}
	=
	\infty.
\]
This shows that also in the Wasserstein topology, the posterior depends increasingly sensitively on perturbations of the prior as the likelihood becomes more informative.
A similar reasoning can be employed to show also the increasing sensitivity w.r.t.~perturbations of the log-likelihood measured in $L^p_\mu$-norms. 
\end{rem}

\paragraph{Wasserstein Well-posedness.} 
In the following we show how the stability result in Theorem \ref{lem:Wasserstein_Phi} can be used to establish well-posedness of Bayesian inverse problems (BIP) in Wasserstein distance.
The well-posedness of BIP w.r.t.~Hellinger distance, including a local Lipschitz-continuous dependence on the data, has been studied in a number of works \cite{DashtiStuart2017, Hosseini2017, HosseiniNigam2017, Latz2019, Stuart2010, Sullivan2017}.
Recently, well-posedness has been extended to the Kullback--Leibler divergence and Wasserstein distance in \cite{Latz2019}, but stating only a continuous dependence on the data.
We prove a local Lipschitz dependence on the observed data in Wasserstein distance based on Theorem \ref{lem:Wasserstein_Phi}.

We briefly recall the Bayesian setting from Section \ref{sec:pre}: Given a prior $\mu \in \mc P(E)$ for the unknown and an observed realization $y\in\bbR^n$ of $Y \coloneqq G(X) + \varepsilon$, $X\sim \mu$ and $\varepsilon \sim \nu_\varepsilon$, the resulting posterior $\muN$ is of the form \eqref{equ:mu} with $\Phi(x) \coloneqq \ell(y - G(x))$.
Here, $\ell$ denotes the negative log-density of the $\nu_\varepsilon(\d \epsilon) \propto \exp(- \ell(\epsilon)) \d \epsilon$.
We now show a local Lipschitz continuous dependence of the posterior $\muN$ on the data $y\in\bbR^n$ in Wasserstein distance---given the same basic assumption on $\Phi$ or $\ell$, respectively, required in \cite{DashtiStuart2017, Stuart2010} and slightly modified in \cite{Hosseini2017, HosseiniNigam2017, Sullivan2017} for the Hellinger distance.

\begin{cor} \label{cor:Wasserstein_wellposed_Stuart}
Let $\muN \in \mc P(E)$ be given as in \eqref{equ:mu} with $\Phi(x) = \ell(y - G(x))$ and assume that $G \colon E \to \bbR^n$ and $\ell \colon \bbR^n \to [0,\infty)$ are measurable. 
Furthermore, we assume there exists a monotonic and non-decreasing function $M\colon [0,\infty)\times \bbR \to [0,\infty)$ such that for any $y,\widetilde y\in\bbR^n$ with $|y|,|\widetilde y| \leq r < \infty$, $r>0$, we have
\[
	\left|\ell(y-G(x)) - \ell(\widetilde y-G(x))\right|
	\leq
	M(r, \|x\|) \ |y-\widetilde y|
	\qquad
	\forall x \in E,
\]
as well as $M(r,\|\cdot\|) \in L^2_{\mu}(\bbR)$.
If there exists a bounded set $A\subset E$ with $\mu(A)>0$, then there exists for any $r>0$ a constant $C_r <\infty$ such that for each $|y|,|\widetilde y| \leq r$ we have
\[
	\W_1(\muN,\muA) \leq C_r |y-\widetilde y|
\]
where $\muA$ is as in \eqref{equ:mu_tilde_1} with $\widetilde \Phi(x) = \ell(\widetilde y - G(x))$.
\end{cor}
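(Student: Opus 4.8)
The plan is to deduce the estimate from the log‑likelihood stability result, Theorem~\ref{lem:Wasserstein_Phi}, applied to $\Phi(x)=\ell(y-G(x))$ and $\widetilde\Phi(x)=\ell(\widetilde y-G(x))$. Both functions are nonnegative because $\ell\ge0$, so $\essinf_\mu\widetilde\Phi\ge0$ and the prefactor $\exp(-[\essinf_\mu\widetilde\Phi]_-)$ appearing there equals $1$; the normalisation $\essinf_\mu\Phi=0$ required by that theorem enters its proof only through this prefactor (via $\e^{-\min(\Phi,\widetilde\Phi)}\le\e^{-[\essinf_\mu\widetilde\Phi]_-}$), and with $\Phi\ge0$ this bound reads $\e^{-\min(\Phi,\widetilde\Phi)}\le1$ regardless of the value of $\essinf_\mu\Phi$. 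Theorem~\ref{lem:Wasserstein_Phi} then yields
\[
	\W_1(\muN,\muA)\le \frac1{\widetilde Z}\Bigl(|\muN|_{\mc P^1}\,\|\Phi-\widetilde\Phi\|_{L^1_\mu}+|\mu|_{\mc P^2}\,\|\Phi-\widetilde\Phi\|_{L^2_\mu}\Bigr),
\]
and it remains to bound every factor uniformly for $|y|,|\widetilde y|\le r$.

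For the log‑likelihood differences I would use the hypothesised modulus $M$ directly: pointwise $|\Phi(x)-\widetilde\Phi(x)|=|\ell(y-G(x))-\ell(\widetilde y-G(x))|\le M(r,\|x\|)\,|y-\widetilde y|$, so that $\|\Phi-\widetilde\Phi\|_{L^p_\mu}\le \|M(r,\|\cdot\|)\|_{L^p_\mu}\,|y-\widetilde y|$ for $p\in\{1,2\}$, both finite since $M(r,\|\cdot\|)\in L^2_\mu\subseteq L^1_\mu$. The moment factors are controlled by $|\muN|_{\mc P^1}\le Z^{-1}|\mu|_{\mc P^1}\le Z^{-1}|\mu|_{\mc P^2}$ (using $\e^{-\Phi}\le1$ and Jensen's inequality), so everything reduces to the finiteness of $|\mu|_{\mc P^2}$---the standing assumption carried over from Theorem~\ref{lem:Wasserstein_Phi}, and automatic when $d_E$ is taken bounded---together with a uniform lower bound on $\min(Z,\widetilde Z)$.

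The main obstacle is precisely this uniform‑in‑$y$ lower bound on the evidence, and it is here that the bounded set $A$ and the monotonicity of $M$ are needed. Since $\ell$ is finite‑valued, $\e^{-\Phi}>0$ everywhere, hence $Z_y:=\int_E\e^{-\ell(y-G(x))}\,\mu(\d x)>0$ for each individual $y$; the work is to make this positivity uniform over the compact ball $\{|y|\le r\}$. I would restrict the integral to $A$, set $R_A:=\sup_{x\in A}\|x\|<\infty$, and compare to the fixed reference datum $y'=0$ through the Lipschitz bound, exploiting monotonicity to replace $M(r,\|x\|)$ by $M(r,R_A)$ on $A$:
\[
	\ell(y-G(x))\le \ell(-G(x))+M(r,\|x\|)\,|y|\le \ell(-G(x))+r\,M(r,R_A),\qquad x\in A,
\]
whence $Z_y\ge \e^{-r M(r,R_A)}\int_A\e^{-\ell(-G(x))}\,\mu(\d x)=:c_r>0$, the reference integral being strictly positive because its integrand is positive and $\mu(A)>0$; the identical bound applies to $\widetilde Z$. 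Collecting the pieces and using $\|M(r,\|\cdot\|)\|_{L^1_\mu}\le\|M(r,\|\cdot\|)\|_{L^2_\mu}$ gives $\W_1(\muN,\muA)\le C_r\,|y-\widetilde y|$ with, for instance, $C_r=c_r^{-1}\bigl(c_r^{-1}+1\bigr)\,|\mu|_{\mc P^2}\,\|M(r,\|\cdot\|)\|_{L^2_\mu}$. The only genuinely delicate step is securing $c_r>0$ uniformly over the data ball; the remaining estimates are routine applications of Jensen's and the Cauchy--Schwarz inequalities together with $\e^{-\Phi}\le1$.
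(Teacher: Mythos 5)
Your proof is correct and follows essentially the same route as the paper: invoke Theorem~\ref{lem:Wasserstein_Phi} (noting $\Phi,\widetilde\Phi\ge0$ makes the exponential prefactor equal to one), bound $\|\Phi-\widetilde\Phi\|_{L^p_\mu}$ via $M$, and obtain the uniform lower bound on $\min(Z,\widetilde Z)$ by restricting to the bounded set $A$, comparing to the reference datum $y'=0$, and using the monotonicity of $M$. The only differences are cosmetic (your $M(r,\sup_{x\in A}\|x\|)$ versus the paper's $\sup_{x\in A}M(r,\|x\|)$, and your slightly more explicit justification that $\essinf_\mu\Phi=0$ is not actually needed).
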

\begin{proof}
By construction, we have $\essinf_{\mu} \widetilde \Phi \geq 0$ and obtain by means of Theorem \ref{lem:Wasserstein_Phi}
\[	 
	\W_1(\muN, \muA) 
	\leq 
	\frac{2|\mu|_{\mc P^2}}{\min(Z,\widetilde Z)^2} \|\Phi - \widetilde \Phi\|_{L^2_{\mu}}
	\leq
	\frac{2|\mu|_{\mc P^2}}{\min(Z,\widetilde Z)^2} \|M(r,\|\cdot\|) \|_{L^2_{\mu}}\ |y-\widetilde y|,
\]
where the last inequality followed by our assumption.
It remains to bound $\min(Z,\widetilde Z)$ uniformly (w.r.t.~$|y|,|\widetilde y|$) from below. 
By using the assumption again we obtain
\[
	\max\left(\ell(y-G(x)), \ell(\widetilde y-G(x))\right)  \leq \ell(0-G(x)) + rM(r,\|x\|).
\]
For the bounded set $A$ we define $R_A \coloneqq \sup_{x\in A} M(r,\|x\|) < \infty$ which then yields 
\begin{eqnarray*}
	\min(Z,\widetilde Z) 
	& \geq & \int_{E} \exp(- \ell(0-G(x)) - rM(r,\|x\|) )\ \mu(\d x)\\
	& \geq & \int_{A} \exp(- \ell(0-G(x)) - rM(r,\|x\|) )\ \mu(\d x)\\
	& \geq & \exp(-rR_A ) \int_{A} \exp(- \ell(0-G(x)) \ \mu(\d x) > 0,
\end{eqnarray*}
due to $\mu(A)>0$. This concludes the proof.
\end{proof}
By similar arguments and appropriate assumptions, cf.  \cite[Section 4.2]{DashtiStuart2017}, local Lipschitz continuity in Wasserstein distance for converging approximation $G_h$ of $G$, i.e., $\lim_{h\to0} G_h(x) = G(x)$ for all $x\in E$, can be shown.
%
%
\section{Discussion of Related Literature}
\label{sec:literature}

Besides the rather recent well-posedness studies of Bayesian inverse problems, the idea of a robust Bayesian analysis and the question about the sensitivity of the posterior w.r.t.~the prior measure (or the likelihood function) have a long history in Bayesian statistics. 
Some of the early references are \cite{Dempster1975, Good1950, Huber1973} and convenient overviews of many existing approaches and (positive) results are given in \cite{Berger1990,Berger1994,InsuaRuggeri2000}.

A common approach in robust Bayesian analysis is to consider a class of possible and sensible priors $\Gamma \subset \mc P(E)$, or likelihood functions, and to study and bound the range of a functional of interest $f\colon \mc P(E)\to \bbR$ over the set of resulting posterior measures, i.e., to estimate $\inf_{\mu \in \Gamma} f(\muN)$ and $\sup_{\mu \in \Gamma} f(\muN)$.
These bounds can then be used for robust decision making accounting for a variation of the prior, or likelihood.
Typical functionals of interest are, for instance, probabilities of certain events, e.g., $f(\mu) = \mu(A)$, $A\in\mc E$, the (Fr\'echet) mean of $\mu$ or the covariance of $\mu$ if $E$ is a linear space.
There exist several common types of classes of priors with corresponding bounds on the range of various functionals $f$.
We refer to the literature above and focus only on a particular, appealing type of class---the \emph{$\epsilon$-contamination class}---later on.

Moreover, in the described setting of robust Bayesian analysis also a notion of non-robustness or instability of Bayesian inference has been established, called the \emph{dilation phenomenon} \cite{WassermanSeidenfeld1994}.
This occurs if
\[
	\inf_{\mu \in \Gamma} f(\muN) \leq \inf_{\mu \in \Gamma} f(\mu)
	\leq
	\sup_{\mu \in \Gamma} f(\mu) \leq \sup_{\mu \in \Gamma} f(\muN)
\]
with one of the outer inequalities being strict.
Thus, dilation means that the posterior range of $f$ is larger than the prior range of $f$ over the class $\Gamma$.
Recently, an extreme kind of dilation, called \emph{Bayesian brittleness}, was established in \cite{OwhadiScovel2016, OwhadiEtAl2015a, OwhadiEtAl2015b} w.r.t.~(a) arbitrarily small perturbations of the likelihood and (b) classes of priors $\Gamma_k\subset \mc P(E)$ specified only by $k\in\bbN$ moments or other functionals.

Another approach to robust Bayesian analysis, starting with \cite{DiaconisFreedman1986}, considers the Fr\'echet and G\^ateaux derivative of the posterior measure $\muN$ w.r.t.~perturbations of the prior measure $\mu + \rho$ where $\rho$ denotes a suitable signed measure of mass zero.
This leads to a derivative-based sensitivity analysis of Bayesian inference, see, e.g., \cite{DeyBirmiwal1994,GelfandDey1991, GustafsonWasserman1995}.
Already in these works, particularly \cite{DiaconisFreedman1986,GustafsonWasserman1995}, the increasing sensitivity of the posterior measure in case of an increasing amount of observational data was noticed.

In the following we discuss in more detail the relation of our stability results to the classical robust Bayesian analysis for $\epsilon$-contamination classes of prior measures as well as to the derivative-based sensitivity analysis of posterior measures, and, moreover, explain why our results do not contradict Bayesian brittleness.

\paragraph{Robustness for $\epsilon$-contamination classes.}
A commonly used class of admissible priors in robust Bayesian analysis are $\epsilon$-contamination classes:
Given a reference prior $\mu \in \mc P(E)$ and a set of suitable perturbing probability measures $\mc Q \subset \mc P(E)$, we consider the class
\[
	\Gamma_{\epsilon, \mc Q}(\mu)
	\coloneqq
	\{
		(1-\epsilon) \mu + \epsilon \nu \colon \nu \in \mc Q
	\}
	\subset \mc P(E),
	\qquad
	\epsilon >0.
\] 
Common choices for $\mc Q$ are simply $\mc Q = \mc P(E)$, all symmetric and unimodal distributions on $E$, or all distributions such that $(1-\epsilon) \mu + \epsilon \nu$ is unimodal if $\mu$ is.
The choice $\mc Q = \mc P(E)$ is, of course, the most conservative and comes closest to our setting.
For brevity we denote $\Gamma_{\epsilon}(\mu) \coloneqq \Gamma_{\epsilon, \mc P(E)}(\mu)$ in the following.
If we consider now balls $B^\mathrm{TV}_\epsilon$ of radius $\epsilon>0$ in $\mc P(E)$ w.r.t.~total variation distance $d_\mathrm{TV}$ we have 
\[
	\Gamma_{\epsilon}(\mu)
	\subset
	B^\mathrm{TV}_\epsilon(\mu)
	\coloneqq
	\{\widetilde \mu \in \mc P(E)\colon d_\mathrm{TV}(\mu, \widetilde \mu) \leq \epsilon\}, 	
\]
since $d_\mathrm{TV}\left( (1-\epsilon) \mu + \epsilon \nu, \mu\right) \leq \epsilon$.
However, the $\epsilon$-contamination class $\Gamma_{\epsilon}(\mu)$ is in general a strict subset of the ball $B^\mathrm{TV}_\epsilon(\mu)$, because $\supp \mu \subseteq \supp [(1-\epsilon) \mu + \epsilon \nu]$ whereas there exist probability measures $\widetilde \mu$ with $d_\mathrm{TV}(\mu, \widetilde \mu) \leq \epsilon$ but $\supp \mu \nsubseteq \supp \widetilde \mu$.
Thus, our prior stability results are, in general, w.r.t.~a larger class of perturbed prior measures than $\epsilon$-contamination classes.

Furthermore, we establish a local Lipschitz continuous dependence of the posterior measure on the prior w.r.t.~particular probability distances such as the total variation distance. 
This is, in general, a different concept than bounding the posterior range of functionals of interest.
Of course, for certain cases we can find relations.
For example, concerning probabilities, i.e., functionals $f_A(\mu) \coloneqq \mu(A)$ where $A\in\mc E$, a local Lipschitz continuity in terms of the total variation distance as in Theorem \ref{theo:TV} implies also bounds on the posterior range of $f_A$ over $\Gamma_\epsilon(\mu)$.
In particular, we obtain with the results of Section \ref{sec:hellinger} that for all $A \in \mc E$
\[
	\inf_{\widetilde \mu \in \Gamma_{\epsilon}(\mu)} \muB(A)
	\geq 
	\muN(A) - \frac{2\epsilon}Z,
	\qquad
	\sup_{\widetilde \mu \in \Gamma_{\epsilon}(\mu)} \muB(A)
	\leq 
	\muN(A) + \frac{2\epsilon}Z.
\]
However,  in \cite{Huber1973} we find explicit expressions for the range of posterior probabilities for an $A\in\mc E$ over the class $\Gamma_{\epsilon}(\mu)$:
\begin{eqnarray*}
	\inf_{\widetilde \mu \in \Gamma_{\epsilon}(\mu)}
	\muB(A)
	& = &
	\mu_\Phi(A)
	\left( 1 + \frac{\epsilon \sup_{x\notin A} \exp(-\Phi(x))}{(1-\epsilon)Z}\right)^{-1},\\
	\sup_{\widetilde \mu \in \Gamma_{\epsilon}(\mu)}
	\muB(A)
	& = &
	\frac{(1-\epsilon)Z\mu_\Phi(A) + \epsilon \sup_{x\in A} \exp(-\Phi(x))}{(1-\epsilon)Z + \epsilon \sup_{x\in A} \exp(-\Phi(x))}.
\end{eqnarray*}
On the other hand, these exact bounds do not allow the derivation of local Lipschitz continuity w.r.t.~the total variation distance on $\Gamma_{\epsilon}(\mu)$, because they do not imply a bound for $|\muB(A) - \muN(A)|$ by a constant times $d_\mathrm{TV}(\mu,\widetilde \mu)$. 
Nonetheless, these exact ranges can be used to study lower bounds for the total variation distance of perturbed posteriors:
\begin{eqnarray*}
	\sup_{\widetilde \mu \in B^\mathrm{TV}_\epsilon(\mu)} d_\mathrm{TV}(\muN, \muB)
	& \geq &
	\sup_{\widetilde \mu \in \Gamma_{\epsilon}(\mu)}
	d_\mathrm{TV}(\muN, \muB)
	=
	\sup_{A \in \mc E}\, \sup_{\widetilde \mu \in \Gamma_{\epsilon}(\mu)}
	|\muN(A) - \muB(A)|\\
	& = &
	\sup_{A \in \mc E} \max\left\{\muN(A) - \inf_{\widetilde \mu \in \Gamma_{\epsilon}(\mu)}
	\muB(A),\ \sup_{\widetilde \mu \in \Gamma_{\epsilon}(\mu)}
	\muB(A) -\muN(A)\right\}.
\end{eqnarray*}

\paragraph{Bayesian brittleness.}
In \cite{OwhadiScovel2016, OwhadiEtAl2015a, OwhadiEtAl2015b} the authors establish several results concerning an extreme instability of Bayesian inference w.r.t.~(a) small perturbations of the likelihood function and (b) w.r.t.~a class of priors specified only by finitely many ``generalized'' moments.
They call this instability \emph{brittleness} and state it w.r.t.~the posterior range of functionals\footnote{Actually, in \cite{OwhadiScovel2016, OwhadiEtAl2015a, OwhadiEtAl2015b} the functionals $f$ are functionals of the data distribution, i.e., $f\colon \mc P(\bbR^n)\to \bbR$.
However, in the parametric setting considered here, e.g., the distribution of the data or observable $Y$ given $x\in E$ is $N(G(x), \Sigma)$ for the Gaussian noise model, these functionals can be understood as functionals acting on $x \in E$, i.e., $f\colon E \to \bbR$.} $f\colon E \to \bbR$.

Their brittleness result concerning perturbed likelihood models is that for arbitrary small perturbations the resulting range of posterior expectations of $f$ is the same as the (essential) range of $f$ over the support of the prior $\mu$.
This result is no contradiction to the local Lipschitz stability shown in this paper.
The crucial difference between both results, brittleness and stability, is the way how perturbations of the likelihood are measured:
In \cite{OwhadiEtAl2015a, OwhadiEtAl2015b} the likelihood function $L$ is considered as a function of the parameter $x\in E$ and the data $y\in \bbR^n$---i.e,. $L(x,y) \propto \exp(- \Phi(x,y))$---and a perturbed likelihood $\widetilde L$---i.e., $\widetilde L(x,y) \propto \exp(-\widetilde \Phi(x,y))$---is considered close to $L$ if for all $x\in E$ the resulting data distribution on $\bbR^n$ with Lebesgue density $\widetilde L(x,\cdot)$ is close to the distribution with Lebesgue density $L(x,\cdot)$. 
For instance, employing the total variation distance for the induced data distributions on $\bbR^n$ we would consider $\widetilde L$ close to $L$ if $d_\mathrm{L}(L,\widetilde L) \coloneqq \sup_{x \in E} \|L(x,\cdot) - \widetilde L(x,\cdot)\|_{L^1}$ is small---here, the $L^1$-norm is taken w.r.t.~the Lebesgue measure on $\bbR^n$.
Thus, closeness of likelihood functions is considered in an average sense w.r.t.~the data $y$ but then uniformly w.r.t.~$x\in E$.
In this paper, on the other hand, we assume fixed data $y \in \bbR^n$ and consider the negative log-likelihoods $\Phi(\cdot) \coloneqq - \log L(\cdot, y)$ and $\widetilde \Phi(\cdot) \coloneqq -\log \widetilde L(\cdot, y)$ close to each other if $\|\Phi - \widetilde \Phi\|_{L^1_{\mu}}$ is small.
Thus, in our case closeness of log-likelihoods is considered in an average sense w.r.t.~the parameter $x\in E$ and for the fixed observed data $y\in\bbR^n$.
In Appendix \ref{sec:Brittle_App} we discuss in greater detail (i) why brittleness w.r.t.~the likelihood is natural if perturbations are measured by the distance $d_\mathrm{L}$ as above, and (ii) how stability can again be obtained if we employ the alternative distance $\widehat d_\mathrm{L}(L, \widetilde L) \coloneqq \sup_{y \in \bbR^n} \|L(\cdot, y) - \widetilde L(\cdot, y)\|_{L^1_{\mu}}$.
Note that the latter distance implies bounds on the perturbed marginal likelihood or evidence $\widetilde Z = \int_E \widetilde L(x,y)\ \mu(\d x)$ whereas the first distance $d_\mathrm{L}$ does not. 
This fact yields the difference between stability and brittleness, see Appendix \ref{sec:Brittle_App}.

The second brittleness result in \cite{OwhadiEtAl2015a, OwhadiEtAl2015b} is stated for classes of priors on $E$ defined only by a set of finitely many functionals\footnote{Again, in \cite{OwhadiEtAl2015a, OwhadiEtAl2015b} the functionals $\Psi_k$ are actually functionals of the data distribution associated with $x\in E$, i.e., $\Psi_k\colon \mc P(\bbR^n)\to \bbR$.} $\Psi_k\colon E\to\bbR$, $k=1,\ldots,K$.
In particular, given a measure $\nu_0 \in \mc P(\bbR^K)$ we consider the class $\Gamma \coloneqq \{\mu \in \mc P(E)\colon \Psi_*\mu = \nu_0\}$ of priors where $\Psi(x)\coloneqq(\Psi_1(x),\ldots,\Psi_K(x))$ and $\Psi_*\mu$ denotes the pushforward measure.
This construction accounts for the fact that in practice only finitely many information are available in order to derive or choose a prior measure.
In \cite{OwhadiEtAl2015a, OwhadiEtAl2015b} it is then shown under mild assumptions that the range of posterior expectations of an $f\colon E \to \bbR$ resulting from priors $\widetilde \mu \in \Gamma$ coincides with the range of $f$ on $E$.
Again, this is not a contradiction to the local Lipschitz stability w.r.t.~the prior established in this paper, since the class $\Gamma$ is, in general, quite different from balls $B_r(\mu) \subset \mc P(E)$ with radius $r>0$ around a reference prior $\mu$ in Hellinger or Wasserstein distance.

\paragraph{Derivative of the posterior and local sensitivity diagnostics.} 
Besides the rather global pertubation estimates derived in the  robust Bayesian analysis for, e.g., contamination classes of prior measures, 
several authors studied the local sensitivity of the posterior measure w.r.t.~the prior.
As a first result we mention the \emph{derivative} of the posterior $\mu_\Phi$ w.r.t.~the prior $\mu$ in the total variation topology introduced by \cite{DiaconisFreedman1986} as follows.
Let $T_\Phi \colon \mc P(E) \to \mc P(E)$ denote the map from prior $\mu$ to posterior $T_\Phi(\mu) \coloneqq \muN(\d x) = \frac 1Z \exp(-\Phi(x)) \, \mu(\d x)$.
In order to define the derivative of $T_\Phi$ we consider the set $\mc S_0(E)$ of signed measures $\rho\colon \mc E\to\bbR$ on $E$ with zero mass $\rho(E) = 0$ for modelling perturbations of probability measures, i.e., perturbed priors $\widetilde \mu = \mu + \rho$.
We introduce the set of all admissible perturbations\footnote{In \cite{DiaconisFreedman1986} the authors allow for any perturbation $\rho \in \mc S_0(E)$ extending the application of $T_\Phi$ also to signed measures.} $P_{\mu} \coloneqq \{\rho \in \mc S_0(E)\colon \mu + \rho \in \mc P(E)\}$ of a prior $\mu\in\mc P(E)$ and notice that $P_{\mu}$ is star-shaped with center $\rho_0 = 0$.
Then the derivative $\partial T_\Phi(\mu)$ of $T_\Phi$ at a prior $\mu\in \mc P(E)$ is defined as the linear map from $P_{\mu}$ to $\mc S_0(E)$ satisfying
\[
	\lim_{\|\rho\|_\mathrm{TV}\to0} \frac{\|T_\Phi(\mu + \rho) - T_\Phi(\mu) - \partial T_\Phi(\mu)\rho\|_\mathrm{TV}}{\|\rho\|_\mathrm{TV}}
	=0,
\]
where $ \|\rho\|_\mathrm{TV} \coloneqq \int_E \left| \frac{\d \rho}{\d \nu} \right| \ \d \nu$ denotes the total variation norm of a (signed) measure $\rho$ with $\rho \ll \nu$ for a $\sigma$-finite measure $\nu$.
In \cite[Theorem 4]{DiaconisFreedman1986} it is then shown that
\[
	\partial T_\Phi(\mu)\rho
	=
	\frac 1Z \e^{-\Phi} \left(\rho - \frac {\int_E \e^{-\Phi} \d \rho}{Z} \mu \right)
	\in \mc S_0(E).
\]
Moreover, \cite[Theorem 4]{DiaconisFreedman1986} states the following bounds for the norm of the derivative $\|\partial T_\Phi(\mu)\| \coloneqq \sup_{\|\rho\|_\mathrm{TV} = 1} \|\partial T_\Phi(\mu) \rho\|_\mathrm{TV}$:
\[
	\frac 1Z \sup_{x\in E \colon \mu(\{x\})=0} \e^{-\Phi(x)} \ \leq \ \|\partial T_\Phi(\mu)\| \ \leq\ \frac1Z \sup_{x\in E}\e^{-\Phi(x)},
\]
i.e., for non-atomic priors $\mu$ we have $\|\partial T_\Phi(\mu)\| = \frac1Z$ given our standing assumption $\inf_x \Phi(x) = 0$.
This already implies an increasing sensitivity of the posterior w.r.t.~perturbations of the prior for increasingly informative likelihoods, i.e., a decreasing normalization constant $Z$.

Based on the Fr\'echet derivative $\partial T_\Phi(\mu)$ at $\mu$ other authors studied the sensitivies of $T_\Phi$ w.r.t.~a given class of possible perturbations, see, e.g., \cite{DeyBirmiwal1994, GelfandDey1991,GustafsonWasserman1995}.
For instance, given an $\epsilon$-contamination class $\Gamma_{\epsilon, \mc Q}(\mu)$ as above the authors of \cite{GustafsonWasserman1995} study the sensitivity $s(\mu, \mc Q;\Phi) \coloneqq \sup_{\nu \in\mc Q} s(\mu, \nu; \Phi)$ with local sensitivies
\[
	s(\mu, \nu; \Phi)
	\coloneqq
	\lim_{\epsilon \to 0}
	\frac{d_\mathrm{TV}( T_\Phi(\mu), T_\Phi((1-\epsilon) \mu + \epsilon \nu))}{d_\mathrm{TV}(\mu, (1-\epsilon) \mu + \epsilon \nu)}.
\]
Since $(1-\epsilon) \mu + \epsilon \nu = \mu + \epsilon (\nu -\mu)$ and $d_\mathrm{TV}(\mu, (1-\epsilon) \mu + \epsilon \nu) = \epsilon \|\nu - \mu\|_\mathrm{TV}$, this local sensitivity coincides with the norm of the G\^ateaux derivative of $T_\Phi$ at $\mu$ in the direction $\rho = \nu-\mu \in \mc S_0(E)$, i.e., $s(\mu, \nu; \Phi) = \|T_\Phi(\mu)(\nu-\mu)\|_\mathrm{TV}$.
In \cite{GustafsonWasserman1995} the authors consider furthermore \emph{geometric} perturbations of the prior such as $\widetilde \mu(\d x) \propto \left(\frac{\d \nu}{\d \mu}\right)^\epsilon \mu(\d x)$, $\epsilon >0$, and local sensitivities based on divergences rather than total variation distance, see also \cite{DeyBirmiwal1994, GelfandDey1991} employing the Kullback--Leibler divergence.
Again, they derive an increasing sensitivity $s(\mu, \mc Q;\Phi)\to\infty$ for various classes $\mc Q$ as the likelihood $\e^{-\Phi}$ becomes more informative due to more observations in their case.
In particular, they derive explicit growth rates of $s(\mu, \mc Q;\Phi_N)$ w.r.t.~$N\in\bbN$ where $N$ denotes the number of i.i.d.~observations employed for Bayesian inference and $\Phi_N$ the corresponding log-likelihood.

These results on Fr\'echet or G\^ateaux derivatives w.r.t.~the prior measure are quite close to our approach establishing explicit bounds on the local Lipschitz constant. 
In particular, the constant $C_{\mu,\Phi}(r)$ in the corresponding result \eqref{equ:cont_mu} can be seen as an upper bound on the norm of the derivative $\|\partial T_\Phi(\widetilde \mu)\|$ for all perturbed priors $\widetilde \mu \in B_r(\mu)$ belonging to the $r$-ball around $\mu$ in $\mc P(E)$---cf. Theorem \ref{theo:TV} stating that $d_\mathrm{TV}(\muN, \muB) \leq \frac{2}{Z}\,d_\mathrm{TV}(\mu, \widetilde \mu)$. 
Compared to the studies in \cite{DeyBirmiwal1994, GelfandDey1991,GustafsonWasserman1995} we allow for arbitrary perturbed priors not restricted to (geometric) $\epsilon$-contamination classes and, moreover, we consider different topologies on $\mc P(E)$ induced by Hellinger distance, Kullback--Leibler divergence and Wasserstein distance.

\paragraph{Acknowledgments.}
The author would like to thank Jonas Latz, Han Cheng Lie and Daniel Rudolf for valuable comments, as well as Claudia Schillings and Tim Sullivan for helpful discussions and for the encouragement to write this note.
Moreover, the author acknowledges the financial support by the DFG within the project 389483880.

%
%

\appendix
\section{On Brittleness and Stability w.r.t.~Perturbed Likelihoods}
\label{sec:Brittle_App}

In this appendix we discuss in more detail the phenomenon of Bayesian brittleness for perturbed likelihoods as stated in \cite[Theorem 6.4]{OwhadiEtAl2015a}.
Moreover, we reveal the mathematical reason behind the brittleness and show how one can obtain stability by modifying the distance for the likelihood functions.

\paragraph{Setting.} We first recall the setting in \cite{OwhadiEtAl2015a,OwhadiEtAl2015b}. 
We assume a fixed prior measure $\mu \in \mc P(E)$ and for simplicity only consider the parametric case where the distribution of the observable data on $\bbR^n$ depends only on $x\in E$.
I.e., consider a prior distributed random variable $X\sim \mu$ on $E$ and an observable random variable $Y$ on $\bbR^n$ such that the conditional distribution of $Y$ given $X=x$ is given by $\nu_x \in \mc P(\bbR^n)$ with $\nu_x(\d y) = L(x,y) \d y$ for a positive Lebesgue density $L(x,\cdot) \colon \bbR^n \to (0,\infty)$.
Thus, $L(x,\cdot) \in L^1(\bbR^n)$ for all $x\in E$ and we suppose that $L\colon E\times \bbR^n \to (0,\infty)$ is jointly measurable.
Moreover, rather than observing a precise realization $y \in \bbR^n$ of $Y$ we suppose that we observe the event $Y \in B_\delta(y) \subset \bbR^n$, i.e., we account for a finite resolution of the data described by the radius $\delta>0$ of the ball $B_\delta(y) = \{y' \in \bbR^n\colon |y-y'|\leq \delta\}$.
Conditioning $X\sim \mu$ on the observation $Y \in B_\delta(y)$ yields a posterior probability measure on $E$ depending on $L$ which we denote by
\[	 
	\mu_L(\d x \ | \ B_\delta(y))
	\coloneqq
	\frac {\exp(- \Phi_L(x))}{Z_L} 
	\ \mu(\d x),\\
	\qquad
	\Phi_L(x)
	\coloneqq
	- \log \int_{B_\delta(y)} L(x,y')\ \d y',
\]
where $Z_L \coloneqq \int_E \exp(- \Phi_L(x))\ \mu(\d x)$.

\paragraph{Bayesian brittleness.}
Let us now consider a perturbed likelihood  model, namely, another jointly measurable $\widetilde L\colon E \times \bbR^n \to (0, \infty)$ such that $\int_{\bbR^n} \widetilde L(x,y) \ \d y = 1$ for all $x \in E$.
This model yields a perturbed posterior measure which we denote by
\[	 
	\mu_{\widetilde L}(\d x \ | \ B_\delta(y))
	\coloneqq
	\frac {\exp(- \Phi_{\widetilde L}(x))}{Z_{\widetilde L} }
	\ \mu(\d x),\\
	\qquad
	\Phi_{\widetilde L}(x)
	\coloneqq
	- \log \int_{B_\delta(y)} \widetilde L(x,y')\ \d y',
\]
and $Z_{\widetilde L} \coloneqq \int_E \exp(- \Phi_{\widetilde L}(x))\ \mu(\d x)$.
We can then ask for stability of the mapping $L\mapsto \mu_L$.
To this end, we measure the distance between the two likelihood models $L$, $\widetilde L$ by the following distance:
\[
	d_\mathrm{L}(L, \widetilde L)
	\coloneqq
	\sup_{x \in E} \|L(x,\cdot) - \widetilde L(x,\cdot)\|_{L^1}
	=
	2 \sup_{x \in E} d_\mathrm{TV}(\nu_x,  \widetilde \nu_x)
\]
where $\widetilde \nu_x \in \mc P(\bbR^n)$ denotes the probability measure on $\bbR^n$ induced by $\widetilde L(x, \cdot)$.
Although, this distance seems natural for comparing parametrized models for data distributions it leads to instability, or brittleness, as stated in \cite[Theorem 6.4]{OwhadiEtAl2015a}: Let $f\colon E \to \bbR$ be a measurable quantity of interest and consider the posterior expectation of $f$ which we simply denote by
\[
	\mu_L(f \ | \ B_\delta(y))
	\coloneqq
	\int_E f(x)\ \mu_L(\d x \ | \ B_\delta(y));
\]
then for each $\epsilon>0$ there exists a $\delta(\epsilon)>0$ such that
\[
	\sup_{\widetilde L\colon d_\mathrm{L}(L, \widetilde L) \leq \epsilon}
	\mu_{\widetilde L}(f \ | \ B_\delta(y))
	\geq
	\esssup_{\mu} f
	\qquad
	\forall 0 < \delta < \delta(\epsilon),\ \forall y\in \bbR^n,
\]
with an analogous statement for the infimum.
Thus, in other words, the range of all (perturbed) posterior expectations of $f$ resulting from all perturbed likelihood models $\widetilde L$ within an $\epsilon$-ball around $L$ w.r.t.~$d_\mathrm{L}$ covers the range of all (essential) prior values of $f$---as long as the observation is sufficiently accurate, i.e., $\delta < \delta(\epsilon)$.

\paragraph{An explanation for brittleness.}
We explain the Bayesian brittleness and the mathematical reason behind in terms of the total variation distance of the posterior measures:
\[	 
	d_\mathrm{TV}(\mu_L(\cdot \ | \ B_\delta(y)), \mu_{\widetilde L}(\cdot \ | \ B_\delta(y)) )
	= \frac 12\int_E \left|\frac 1{Z_{L} }
	\exp(- \Phi_{L}(x)) - \frac 1{Z_{\widetilde L} }
	\exp(- \Phi_{\widetilde L}(x)) \right| \mu(\d x).
\]
Similarly to Theorem \ref{theo:TV} we have
\[
	d_\mathrm{TV}(\mu_L(\cdot \ | \ B_\delta(y)), \mu_{\widetilde L}(\cdot \ | \ B_\delta(y)) )
	\leq
	\frac 1{Z_L} \int_E \left|	\exp(- \Phi_{L}(x)) - \exp(- \Phi_{\widetilde L}(x)) \right| \mu(\d x).
\]
Thus, using the definition of $\Phi_L$ and $\Phi_{\widetilde L}$, we can further bound
\begin{equation} \label{equ:brittle_bound}
	d_\mathrm{TV}(\mu_L(\cdot \ | \ B_\delta(y)), \mu_{\widetilde L}(\cdot \ | \ B_\delta(y)) )
	\leq \frac 1{Z_L} \int_E \int_{B_\delta(y)} \left| L(x,y') - \widetilde L(x,y')\right| \ \d y' \ \mu(\d x).
\end{equation}
Hence, for stability we need to control the $L^1$-difference of $|L(x,\cdot) - \widetilde L(x,\cdot)|$ over the observed event, the ball $B_\delta(y)$.
However, the bound $d_\mathrm{L}(L, \widetilde L)< \epsilon$ only implies that
\[
	\int_{B_\delta(y)} \left| L(x,y') - \widetilde L(x,y')\right| \ \d y'
	\leq
	\frac{\epsilon}{|B_\delta(y)|}
	\qquad
	\forall x\in E,
\]
where $|B_\delta(y)|$ denotes the Lebesgue measure of $B_\delta(y_0)\subset\bbR^n$.
Thus, for any $\epsilon$ we can take a sufficiently small $\delta$ and then $\epsilon/|B_\delta(y_0)|$ becomes arbitrarily large.
Now, of course, these are just discussions about controlling upper bounds for the total variation distance between the posteriors, but it should be clear that we can easily construct sufficiently ``bad'' perturbed likelihoods $\widetilde L$ with $d_\mathrm{L}(L,\widetilde L)<\epsilon$ but $d_\mathrm{TV}(\mu_L(\cdot \ | \ B_\delta(y)), \mu_{\widetilde L}(\cdot \ | \ B_\delta(y)) ) \approx 1$, see, for instance, the illustrative example in \cite[pp.~574--575]{OwhadiEtAl2015b}.

\paragraph{Obtaining stability.} 
The above estimate \eqref{equ:brittle_bound} suggests that stability w.r.t.~perturbed likelihoods can only be obtained in a distance for likelihoods $L$ and $\widetilde L$ which allows to control $|L(x,y) - \widetilde L(x,y)|$ uniformly w.r.t.~$y$.
Thus, if we employ the following alternative distance given the fixed prior $\mu$
\[
	\widehat d_\mathrm{L}(L, \widetilde L)
	\coloneqq
	\sup_{y \in \bbR^n} \|L(\cdot,y) - \widetilde L(\cdot,y)\|_{L^1_{\mu}},
\]
then we get by Fubini's theorem that
\begin{eqnarray*}
	d_\mathrm{TV}(\mu_L(\cdot \ | \ B_\delta(y)), \mu_{\widetilde L}(\cdot \ | \ B_\delta(y)) )
	&\leq & \frac 1{Z_L} \int_E \int_{B_\delta(y)} \left| L(x,y') - \widetilde L(x,y')\right| \ \d y' \ \mu(\d x)\\
	&\leq & \frac 1{Z_L} \widehat d_\mathrm{L}(L, \widetilde L),
\end{eqnarray*}
i.e., a local Lipschitz stability.
We remark that using the distance $\widehat d_\mathrm{L}$ implies that we bound the range of the possible likelihoods for the observed event.
As discussed before such a control is crucial for a stability w.r.t.~perturbed likelihood models. 

\section{Hellinger Distance of Gaussian Measures on Separable Hilbert Spaces}\label{app:Hell}
We provide a proof of the explicit expressions for the Hellinger distance of Gaussian measures on a separable Hilbert space $\mc H$ stated in Remark \ref{rem:Hell_Gauss}, since this is missing so far in the literature to the best of our knowledge.

\paragraph{Different means, same covariance.}
We start with proving that if $\widetilde m - m \in \rg C^{1/2}$, then
\[
	d^2_\mathrm{H}(N(m, C), N(\widetilde m, C))
	=
	2
	-
	2\exp\left(-\frac 18 \|C^{-1/2} (m-\widetilde m) \|_{\mc H}^2\right).
\]
To this end, we require the well-known Cameron--Martin formula for the density of $\widetilde \mu \coloneqq N(\widetilde m, C)$ w.r.t.~$\mu \coloneqq N(m, C)$.
This density is, given that $h \coloneqq \widetilde m - m \in \rg C^{1/2}$,
\[
	\frac{\d \widetilde \mu}{\d \mu}(x)
	=
	\exp\left(-\frac 12 \|C^{-1/2} h\|^2_{\mc H} + \langle C^{-1} h, x - m \rangle  \right),
	\qquad
	x\in\mc H,
\]
where $\langle C^{-1} h, \cdot - m \rangle\colon \mc H \to \bbR$ is well-defined as a random variable in $L^2_\mu(\bbR)$, see, e.g., \cite[Chapter 1]{DaPrato2004}.
We  then use that
\begin{equation}\label{equ:Hell_alt}
	d^2_\mathrm{H}(\mu, \widetilde \mu)
	=
	2 - 2 \int_{\mc H} \sqrt{\frac{\d \widetilde \mu}{\d \mu}(x)}\ \mu(\d x)
	\qquad
	\mathrm{ since } \qquad
	\widetilde \mu \ll \mu,
\end{equation}
which can be verified easily, and that for any $x' \in \mc H$ and $\mu = N(m,C)$
\[
	\int_{\mc H} \exp\left( \langle C^{-1/2} x', x - m \rangle \right) \ \mu(\d x)
	=
	\exp\left(\frac 12 \|x'\|_{\mc H}^2 \right),
\]
see \cite[Proposition 1.2.7]{DaPrato2004}, in order to derive that for $\mu = N(m, C)$, $\widetilde \mu = N(\widetilde m, C)$ with $h=\widetilde m - m \in \rg C^{1/2}$
\begin{eqnarray*}
	d^2_\mathrm{H}(\mu, \widetilde \mu)
	& = &
	2 - 2 \exp\left(-\frac 14 \|C^{-1/2} h\|^2_{\mc H} \right)
	\int_{\mc H} \exp\left(\frac 12 \langle C^{-1} h, x - m \rangle \right)\ \mu(\d x)\\
	& = &
	2 - 2 \exp\left(-\frac 14 \|C^{-1/2} h\|^2_{\mc H}\right)  \exp\left(\frac 18 \|C^{-1/2} h\|_{\mc H}^2 \right)\\
	& = &
	2
	-
	2\exp\left(-\frac 18 \|C^{-1/2} (m-\widetilde m) \|_{\mc H}^2\right).
\end{eqnarray*}

\paragraph{Same mean, different covariances.}
We now show that, for $\rg C^{1/2} = \rg \widetilde C^{1/2}$, $T\coloneqq C^{-1/2}\widetilde C C^{-1/2}$ being positive definite and $T - I$ being Hilbert--Schmidt on $\mc H$, we have
\[
	d^2_\mathrm{H}(N(m, C), N(m, \widetilde C))
	=
	2
	-
	2 \left[ \det\left(\frac 12 \sqrt T + \frac 12 \sqrt{T^{-1}}\right)\right]^{-1/2}.
\]
W.l.o.g.~we assume $m=0$ in the following and use \cite[Theorem 3.3]{Kuo1975} which states that for $\mu \coloneqq N(0,C)$ and $\widetilde \mu \coloneqq N(0,\widetilde C)$ and given the assumptions above, we have
\[
	\frac{\d \widetilde \mu}{\d \mu}
	(\psi(\xi))
	=
	\rho(\xi)
	\coloneqq
	\prod_{k=1}^\infty 
	\frac{1}{\sqrt{t_k}} \exp\left(\frac{t_k-1}{2 t_k} \xi_k^2\right),
	\qquad
	\xi = (\xi_1,\xi_2,\ldots)\in \bbR^\bbN,
\]
where the $t_k>0$, $k\in\bbN$, denote the eigenvalues of $T$ and the measurable mapping $\psi\colon \bbR^\bbN \to \mc H$ is specified in the proof of \cite[Theorem 3.3]{Kuo1975}.
We do not require the explicit definition of $\psi$, only the following relation which is also stated in the proof of \cite[Theorem 3.3]{Kuo1975}: With $\nu \coloneqq \bigotimes_{k=1}^\infty N(0,1)$ we have $\mu = \psi_* \nu$, i.e,. $\mu = N(0,C)$ is the pushforward of the product measure $\nu$ under the mapping $\psi$, see \cite{Kuo1975} for details.
We use these facts in combination with \eqref{equ:Hell_alt} to obtain that for $\mu \coloneqq N(0,C)$ and $\widetilde \mu \coloneqq N(0,\widetilde C)$
\begin{eqnarray*}
	d^2_\mathrm{H}(\mu, \widetilde \mu)
	& = &
	2 - 2 \int_{\mc H} \sqrt{	\rho( \psi^{-1}(x) ) }\ \psi_*\nu(\d x)
	=
	2 - 2 \int_{\mc H} \sqrt{	\rho( \xi ) }\ \nu(\d \xi)\\
	& = &
	2 - 2 \prod_{k=1}^\infty \int_\bbR \frac{1}{\sqrt[4]{t_k}}\ \exp\left(\frac{t_k-1}{4 t_k} \xi_k^2\right)\ \exp\left(- \frac 12 \xi_k^2 \right) \ \frac{\d \xi_k}{\sqrt{2\pi}}.
\end{eqnarray*}
A straightforward calculation yields
\[	 
	\int_\bbR\ \exp\left(\frac{t_k-1}{4 t_k} \xi_k^2\right)\ \exp\left(- \frac 12 \xi_k^2 \right) \ \d \xi_k
	=
	\int_\bbR\ \exp\left( -\frac 12 \frac{1+t_k}{2t_k} \xi_k^2\right) \ \d \xi_k
	=
	\sqrt{2\pi} \sqrt{\frac{2t_k}{1+t_k}} 
\]
and, thus,
\[	 
	d^2_\mathrm{H}(\mu, \widetilde \mu)
	=
	2 - 2 \prod_{k=1}^\infty \sqrt{ \frac{2\sqrt{t_k}}{1+t_k} }
	=
	2 - 2 \left[ \prod_{k=1}^\infty \frac{1+t_k}{2\sqrt{t_k}} \right]^{-1/2}
	=
	2 - 2 \left[ \prod_{k=1}^\infty \left( \frac{\sqrt t_k}{2} + \frac{1}{2\sqrt{t_k}}
	 \right)\right]^{-1/2},
\]
where we assumed for the moment that the infinite products converge.
Note, that the infinite product on the right-hand side coincides with $\det\left(\frac 12 \sqrt T + \frac 12 \sqrt{T^{-1}}\right)$ given that this (Fredholm) determinant is finite, i.e., given that $I - \left(\frac 12 \sqrt T + \frac 12 \sqrt{T^{-1}} \right)$ 
is a trace-class operator.
Thus, if we can show that
\[
	\sum_{k=1}^\infty \left(1 - \frac{\sqrt t_k}{2} + \frac{1}{2\sqrt{t_k}}\right)
	=
	\sum_{k=1}^\infty \left(1 - \frac{1+t_k}{2\sqrt{t_k}}\right)
	 < \infty,
\]
then the above formula for $d^2_\mathrm{H}(N(m, C), N(m, \widetilde C))$ is verified.
We define the function $f(t) \coloneqq \frac{1+t}{2\sqrt{t}}$ for $t>0$ and compute its first and second derivative $f'(t) = \frac{ t^{1/2} - t^{-1/2}}{4t}$ and $f''(t) = \frac{3t^{-1/2} - t^{1/2} }{8t^2}$, respectively.
We notice that $f(1) = 1$ and $f'(1) = 0$, hence, 
\[
	\left|1 - \frac{1+t_k}{2\sqrt{t_k}}\right|
	=
	\left|f(1) - f(t_k)\right|
	\leq
	\max_{t \in [1,t_k]}
	|f''(t)|
	\ |1-t_k|^2.
\]
Moreover, we have that $t_k -1 \to 0$ as $k\to\infty$, since $T-I$ is Hilbert--Schmidt on $\mc H$.
Thus, there exists a $k_0 \in \bbN$ such that $|1-t_k| \leq \frac 12$ for $k \geq k_0$.
We obtain by setting $c \coloneqq \max_{t \in [\frac 12, \frac 32]} |f''(t)|  < \infty$ that
\[
	\left|1 - \frac{1+t_k}{2\sqrt{t_k}}\right|
	\leq
	c	
	\ |1-t_k|^2
	\qquad
	\forall k \geq k_0,
\]
which yields, since $T-I$ is Hilbert--Schmidt, that
\[
	\sum_{k=1}^\infty \left(1 - \frac{1+t_k}{2\sqrt{t_k}}\right)
	\leq
	\sum_{k=1}^{k_0} \left(1 - \frac{1+t_k}{2\sqrt{t_k}}\right)
	+
	c \sum_{k=k_0}^{\infty} (t_k-1)^2
	 < \infty.
\]

%
%
\bibliographystyle{unsrt}
\bibliography{literature}

\begin{thebibliography}{10}

\bibitem{DashtiStuart2017}
M.~Dashti and A.~M. Stuart.
\newblock The {B}ayesian approach to inverse problems.
\newblock In R.~Ghanem, D.~Higdon, and H.~Owhadi, editors, {\em Handbook of
  Uncertainty Quantification}, pages 311--428. Springer, 2017.

\bibitem{KaipioSomersalo2005}
J.~Kaipio and E.~Somersalo.
\newblock {\em Statistical and Computational Inverse Problems}.
\newblock Springer, New York, 2005.

\bibitem{Hosseini2017}
B.~Hosseini.
\newblock Well-posed {B}ayesian inverse problems with infinitely divisible and
  heavy-tailed prior measures.
\newblock {\em SIAM/ASA J. Uncertainty Quantification}, 5(1):1024--1060, 2017.

\bibitem{HosseiniNigam2017}
B.~Hosseini and N.~Nigam.
\newblock Well-posed {B}ayesian inverse problems: priors with exponential
  tails.
\newblock {\em SIAM/ASA J. Uncertainty Quantification}, 5(1):436--465, 2017.

\bibitem{Latz2019}
J.~Latz.
\newblock On the well-posedness of {B}ayesian inverse problems.
\newblock arXiv:1902.10257, 2019.

\bibitem{Stuart2010}
A.~M. Stuart.
\newblock Inverse problems: a {B}ayesian perspective.
\newblock {\em Acta Numerica}, 19:451--559, 2010.

\bibitem{Sullivan2017}
T.~J. Sullivan.
\newblock Well-posed {B}ayesian inverse problems and heavy-tailed stable
  quasi-{B}anach space priors.
\newblock {\em Inverse Probl. Imag.}, 11(5):857--874, 2017.

\bibitem{Berger1994}
J.~O. Berger.
\newblock An overview of robust {B}ayesian analysis.
\newblock {\em Test}, 3(1):5--124, 1994.

\bibitem{InsuaRuggeri2000}
D.~R. Insua and F.~Ruggeri, editors.
\newblock {\em Robust {B}ayesian Analysis}, volume 152 of {\em Lecture Notes in
  Statistics}.
\newblock Springer, New York, 2000.

\bibitem{VanDerVaart1998}
A.~W. van~der Vaart.
\newblock {\em Asymptotic Statistics}.
\newblock Cambridge University Press, Cambridge, UK, 1998.

\bibitem{CastilloNickl2013}
I.~Castillo and R.~Nickl.
\newblock Nonparametric {Bernstein--von Mises} theorems in {G}aussian white
  noise.
\newblock {\em Ann. Stat.}, 41(4):1999--2028, 2013.

\bibitem{CastilloNickl2014}
I.~Castillo and R.~Nickl.
\newblock On the {Bernstein--von Mises} phenomenon for nonparametric {B}ayes
  procedures.
\newblock {\em Ann. Stat.}, 42(5):1941--1969, 2014.

\bibitem{Freedman1999}
D.~Freedman.
\newblock On the {Bernstein--von Mises} theorem with infinite-dimensional
  parameters.
\newblock {\em Ann. Stat,}, 27(4):1119--1140, 1999.

\bibitem{Leahu2011}
H~Leahu.
\newblock On the {Bernstein-von Mises} phenomenon in the {G}aussian white noise
  model.
\newblock {\em Electronic Journal of Statistics}, 5:373--404, 2011.

\bibitem{OwhadiScovel2016}
H.~Owhadi and C.~Scovel.
\newblock Brittleness of {B}ayesian inference and new {S}elberg formulas.
\newblock {\em Commun. Math. Sci.}, 14(1):83--145, 2016.

\bibitem{OwhadiEtAl2015a}
H.~Owhadi, C.~Scovel, and T.~Sullivan.
\newblock Brittleness of {B}ayesian inference under finite information in a
  continuous world.
\newblock {\em Electronic Journal of Statistics}, 9(1):1--79, 2015.

\bibitem{OwhadiEtAl2015b}
H.~Owhadi, C.~Scovel, and T.~Sullivan.
\newblock On the brittleness of {B}ayesian inference.
\newblock {\em SIAM Review}, 57(4):566--582, 2015.

\bibitem{DiaconisFreedman1986}
P.~Diaconis and D.~Freedman.
\newblock On the consistency of {Bayes} estimates.
\newblock {\em Ann. Stat.}, 14(1):1--26, 1986.

\bibitem{GustafsonWasserman1995}
L.~Gustafson, P.~and~Wasserman.
\newblock Local sensitivity diagnostics for bayesian inference.
\newblock {\em Ann. Stat.}, 23(6):2153--2167, 1995.

\bibitem{MarzoukXiu2009}
Y.~Marzouk and D.~Xiu.
\newblock A stochastic collocation approach to {B}ayesian inference in inverse
  problems.
\newblock {\em Communications in Computational Physics}, 6(4):826--847, 2009.

\bibitem{Bogachev1998}
V.~Bogachev.
\newblock {\em Gaussian Measures}.
\newblock American Mathematical Society, 1998.

\bibitem{Kuo1975}
H.-H. Kuo.
\newblock {\em Gaussian Measures in Banach Spaces}.
\newblock Lecture Notes in Mathematics. Springer, Berlin Heidelberg, 1975.

\bibitem{HaMaScheu2011}
M.~Hairer, J.~C. Mattingly, and M.~Scheutzow.
\newblock Asymptotic coupling and a general form of {H}arris’ theorem with
  applications to stochastic delay equations.
\newblock {\em Probability Theory and Related Fields}, 149(1--2):223--259,
  2011.

\bibitem{RudolfSchweizer2018}
D.~Rudolf and N.~Schweizer.
\newblock Perturbation theory for {M}arkov chains via {W}asserstein distance.
\newblock {\em Bernoulli}, 24(4A):2610--2639, 2018.

\bibitem{MedinaAguayoEtAl2019}
F.~Medina-Aguayo, D.~Rudolf, and N.~Schweizer.
\newblock Perturbation bounds for {M}onte {C}arlo within metropolis via
  restricted approximations.
\newblock {\em Stoch. Proc. Appl.}, 2019.
\newblock doi:10.1016/j.spa.2019.06.015.

\bibitem{LasanenEtAl2014}
H.~Roininen, J.~M.~J. Huttunen, and S.~Lasanen.
\newblock Whittle--{M}at\'ern priors for {B}ayesian statistical inversion with
  applications in electrical impedance tomography.
\newblock {\em Inverse Probl. Imag.}, 8(2):561--586, 2014.

\bibitem{DunlopEtAl2017}
M.~Dunlop, M.~A. Iglesias, and A.~M. Stuart.
\newblock Hierarchical {B}ayesian level set inversion.
\newblock {\em Stat. Comput.}, 27:1555--1584, 2017.

\bibitem{GibbsSu2002}
A.~L. Gibbs and F.~E. Su.
\newblock On choosing and bounding probability metrics.
\newblock {\em International Statistical Review}, 70(3):419--435, 2001.

\bibitem{StuartTeckentrup2018}
A.~M. Stuart and A.~L. Teckentrup.
\newblock Posterior consistency for {G}aussian process approximations of
  {B}ayesian posterior distributions.
\newblock {\em Math. Comp.}, 87:721--753, 2018.

\bibitem{KleijnVanDerVaart2012}
B.~J.~K. Kleijn and A.~W. van~der Vaart.
\newblock The {Bernstein-von-Mises} theorem under misspecification.
\newblock {\em Electronic Journal of Statistics}, 6:354--381, 2012.

\bibitem{DevroyeEtAl2019}
L.~Devroye, A.~Mehrabian, and T.~Reddad.
\newblock The total variation distance between high-dimensional {G}aussians.
\newblock arXiv:1810.08693v3, 2019.

\bibitem{EndresSchindelin2003}
D.~M. Endres and J.~E. Schindelin.
\newblock A new metric for probability distributions.
\newblock {\em IEEE Transactions on Information Theory}, 49(7):1858--1860,
  2003.

\bibitem{Pardo2006}
L.~Pardo.
\newblock {\em Statistical Inference Based on Divergence Measures}.
\newblock Number 185 in Statistics: Textbooks and Monographs. Chapman \&
  Hall/CRC, Boca Raton, FL, 2006.

\bibitem{Villani2009}
C.~Villani.
\newblock {\em Optimal Transport: Old and New}.
\newblock Number 338 in Grundlehren der Mathematischen Wissenschaften.
  Springer, Berlin Heidelberg, 2009.

\bibitem{Gelbrich1990}
M.~Gelbrich.
\newblock On a formula for the {$L^2$ W}asserstein metric between measures on
  {E}uclidean and {H}ilbert spaces.
\newblock {\em Mathematische Nachrichten}, 147(1):185--203, 1990.

\bibitem{Hwang1980}
C.-R. Hwang.
\newblock {L}aplace's method revisited: weak convergence of probability
  measures.
\newblock {\em Ann. Prob.}, 8(6):1177--1182, 1980.

\bibitem{Dempster1975}
A.~P. Dempster.
\newblock A subjectivist look at robustness.
\newblock {\em Bull. Int. Statist. Inst.}, 46:349--374, 1975.

\bibitem{Good1950}
I.~J. Good.
\newblock {\em Probability and the Weighing of Evidence}.
\newblock Charles Griffin, London, 1950.

\bibitem{Huber1973}
P.~J. Huber.
\newblock The use of {C}hoquet capacities in statistics.
\newblock {\em Bull. Internat. Statist. Inst.}, 45:181--191, 1973.

\bibitem{Berger1990}
J.~O. Berger.
\newblock Robust {B}ayesian analysis: sensitivity to the prior.
\newblock {\em J. Statist. Plann. Inference}, 25(3):303--328, 1990.

\bibitem{WassermanSeidenfeld1994}
L.~Wasserman and T.~Seidenfeld.
\newblock The dilation phenomenon in robust {B}ayesian inference.
\newblock {\em J. Statist. Plann. Inferences}, 40:345--356, 1994.

\bibitem{DeyBirmiwal1994}
D.~K. Dey and L.~R. Birmiwal.
\newblock Robust {B}ayesian analysis using entropy and divergence measures.
\newblock {\em Statist. Probab. Lett.}, 20(1):287--294, 1994.

\bibitem{GelfandDey1991}
A.~E. Gelfand and D.~K. Dey.
\newblock On {B}ayesian robustness of contaminated classes of priors.
\newblock {\em Statist. Decisions}, 9:63--80, 1991.

\bibitem{DaPrato2004}
G.~Da~Prato and J.~Zabczyk.
\newblock {\em Second Order Partial Differential Equations in {H}ilbert
  Spaces}.
\newblock Cambridge University Press, Cambridge, 2004.

\end{thebibliography}

\end{document}